\def\ps@pprintTitle{\let\@oddhead\@empty
  \let\@evenhead\@empty
  \def\@oddfoot{\reset@font\hfil\thepage\hfil}
  \let\@evenfoot\@oddfoot
}
\newtheorem{theorem}{Theorem}[section]
\newtheorem{lemma}[theorem]{Lemma}
\theoremstyle{definition}
\newtheorem{definition}[theorem]
{Definition}
\theoremstyle{remark}
\newtheorem{remark}[theorem]{\upshape\bfseries Remark}
\newtheorem{example}[theorem]{\upshape\bfseries Example}
\newcommand{\C}{\mathbb{C}}
\newcommand{\R}{\mathbb{R}}
\renewcommand{\H}{\mathbb{H}}
\newcommand*{\qi}{\mathbf{i}}
\newcommand*{\qj}{\mathbf{j}}
\newcommand*{\qk}{\mathbf{k}}
\newcommand*{\Cj}[1]{{#1}^\ast}
\renewcommand{\vec}[1]{\mathbf{#1}}
\DeclareMathOperator{\Span}{Span}
\DeclareMathOperator{\RE}{Re}
\DeclareMathOperator{\IM}{Im}
\DeclareMathOperator{\lcm}{lcm}
\begin{document}

\begin{frontmatter}

\title{Optimal interpolation with spatial rational Pythagorean hodograph curves}

\author[1]{Hans-Peter Schröcker}
\ead{hans-peter.schroecker@uibk.ac.at}
\address[1]{Universität Innsbruck, Department of Basic Sciences in Engineering Sciences, Technikerstr.~13, 6020 Innsbruck, Austria}

\author[2]{Zbyněk Šír}
\ead{zbynek.sir@karlin.mff.cuni.cz}
\address[2]{Charles University, Faculty of Mathematics and Physic, Sokolovská 83, Prague 186 75, Czech Republic}

\begin{abstract}
    Using a residuum approach, we provide a complete description of the space of the rational spatial curves of given tangent directions. The rational Pythagorean hodograph curves are obtained as a special case when the norm of the direction field is a perfect square. The basis for the curve space is given explicitly. Consequently a number of interpolation problems ($G^1$, $C^1$, $C^2$, $C^1/G^2$) in this space become linear, cusp avoidance can be encoded by linear inequalities, and optimization problems like  minimal energy or optimal length are quadratic and can be solved efficiently via quadratic programming. We outline the interpolation/optimization strategy and demonstrate it on several examples.
 \end{abstract}

\begin{keyword}
  Rational curve, polynomial curve, PH curve, partial fraction decomposition, residuum, canonical basis, quadratic program, curve energy.
\end{keyword}

\end{frontmatter}

\section{Introduction}
\label{sec:introduction}

Pythagorean hodograph (PH) curves (see \cite{farouki08} and the references cited
therein), form a remarkable subclass of polynomial parametric curves. They have
a piecewise polynomial arc length function and, in the planar case, rational
offset curves. These curves provide an elegant solution of various difficult
problems occurring in applications, in particular in the context of CNC
(computer-numerical-control), see e.g. \cite{Farouki2019}.

Spatial PH curves were introduced by Farouki and Sakkalis in 1994 \cite{farouki94a}, and they have later been characterized using results about Pythagorean quadruples in the ring of polynomials and quaternion calculus \cite{farouki02a,Dietz,choi02b}. Spatial PH curves can be equipped with rational frames, which, in certain cases, are rotation minimizing \cite{RMF1}.

Due to the constrained nature of PH curves, constructions which are linear in the case of polynomial curves typically become \emph{nonlinear} in the PH case and produce infinitely many solutions in the spatial case. Despite this fact various constructions were given in past publications. Hermite interpolation of $G^1$ boundary data was addressed in \cite{Juettler:99b}, and $C^1$ Hermite interpolation by PH quintics was discussed in \cite{FaroukiSpace2}. In the latter case, the authors identify a family of interpolants to any $C^1$ Hermite data which depends on two free parameters, and a heuristic choice for them is given. Later, this construction has also been related to helical interpolants \cite{PHhelices}. Methods for choosing an optimal interpolant were presented e.g. in \cite{sir05,WOS:000526979400002,WOS:000582397100008}. The $C^2$ interpolation problem was solved by PH curves of degree $9$ in \cite{SirC2} and by quintic splines in \cite{C2length,GianelliSplines}.

Rational spatial PH curves were first investigated in \cite{FaroukiSir} where they were described as edges of regression of developable surfaces enveloped by special system of tangent planes. The dual approach was continued in series of papers \cite{krajnc1,krajnc2,krajnc3} where the authors study low degree and low class curves. The theory of rotation minimizing frames was transferred from polynomial to rational spatial curves in~\cite{FaroukiSir2}.

In these  publications on rational PH curves various interpolation problems are mentioned, see e.g. \cite{krajnc3,FaroukiSir2}. There, the interpolation problems present both linear (due to the envelope formula) and non-linear aspects (due to the PH nature of the curves). It is one of the aims of this paper to stress the linear aspect of PH interpolation problems. This is possible because all rational curves with a common tangent field form a linear space. Representations used so far did not really allow exploit this linearity to solve the interpolation and approximation problems efficiently. The dual representation is linear but does not provide an efficient control over subspaces, degrees, numerator-denominator cancellations or the occurrence of cusps.

For this reason in two recent papers \cite{kalkan22,schroecker22} a new approach
was developed to describe a natural basis of the linear space of PH curves with
given tangent field. In \cite{kalkan22}, the theory of rational PH curves was
developed from so called framing rational motions. Representing the spherical
part of the motion by a polynomial $\mathcal{A}(t) \in \H[t]$ in the ring
$\H[t]$ of polynomials with quaternion coefficients and fixing the PH curve's
denominator its numerator was computed by solving a modestly sized and
well-structured system of linear equations. As a main result the understanding
of truly rational solutions within generically polynomial ones was obtained. In
\cite{schroecker22} the complete structure of the solution spaces was obtained
for the generic case.

In the present paper we extend the previous ideas to a residuum based approach,
cf. \cite{FaroukiSakkalis2019}. Basically, rational PH curves are obtained by
integrating expressions of type $\lambda(t) \mathbf F(t)$ where $\lambda(t)$ is
a rational function and $\mathbf F(t) = \mathcal A(t) \qi \Cj{\mathcal A}(t) \in
\mathbb H[t]$ is the typical vector valued polynomial that is also used for the
construction of polynomial PH curves, \cite{farouki08}. It is given in terms of
a quaternion polynomial $\mathcal A(t) \in \mathbb H[t]$, its quaternion
conjugate $\Cj{\mathcal A}(t)$, and the quaternion unit $\qi$ with the property
$\qi^2 = -1$. The basic idea is to impose conditions on $\lambda(t)$ that ensure
zero residua in the Laurent expansions at any root $\beta \in \C$ of the denominator of $\lambda(t)$. This guarantees rational integrands and generalizes a well-known construction of polynomial PH curves to the rational case.

In Sections~\ref{sec:residuum} and \ref{sec:non-regular} we explain these ideas in detail and obtain a general and complete construction for rational PH curves. We also provide an explicit description of the basis for spaces of PH curves of given direction field. Consequently we obtain a tool to solve efficiently various interpolation and optimization problems. It is described in Section~\ref{sec:interpolation}. In Section~\ref{sec:examples} we discuss several examples.

\section{Curves with Given Tangent Indicatrix}
\label{sec:residuum}

We are now going to solve the following problem: Given a rational vector valued quaternion polynomial $\mathbf F(t)\in \mathbb H[t]$ (which we identify with a vector valued polynomial in $\mathbb R^3[t]$), determine the vector space $\mathcal R$ of all the spatial rational curves $\mathbf{r}(t)$ having $\mathbf{F}(t) $ for its tangent field, i.e. satisfying
\begin{equation}
  \label{eq:1}
  \mathbf r'(t)\times {\mathbf F}(t)=0.
\end{equation}

In \cite{kalkan22,schroecker22,FaroukiSir,krajnc1,FaroukiSir2} this problem was studied for $\mathbf F(t) \coloneqq \mathcal{A}(t) \qi \Cj{\mathcal{A}}(t)$. In particular in \cite{schroecker22} extensive results were obtained for generic cases. The method we propose here is more general and straightforward. It works for arbitrary $\mathbf F(t)$, gives explicit insight into the structure of $\mathcal R$, and handles both, generic and non-generic cases.

As already explained in \cite{kalkan22,schroecker22}, it is no loss of generality to assume that the field $\mathbf F(t)$ is \emph{polynomial} and \emph{without real polynomial factors}. Indeed, multiplying any given rational field by the common denominator and factoring out any common factor of the numerator does not influence condition \eqref{eq:1} and therefore the space $\mathcal R$ will remain the same.

We will assume in the remainder of this text that $\mathbf F(t)$ is a truly
spatial\footnote{Generalizations of our theory to fields in $\mathbb R^N$ with
  $N > 3$ or specializations to fields in $\mathbb R^2$ are straightforward. If
  the tangent field $\mathbf F(t)$ is parallel to a vector subspace, then so is
  the resulting curve $\mathbf r(t)$. Thus picking $\mathcal A(t)$ (for example)
  with coefficients in the sub-algebra spanned by $1$ and $\qk$ will result in
  planar rational PH curves.} polynomial field of degree $n$ without polynomial
factors. Consequently for any $\beta \in \mathbb C$ we have the expansion
\begin{equation}
  \label{eq:2}
  \mathbf F(t)=\sum_{j=0}^n \mathbf f_{j}(t-\beta)^{j},
\end{equation}
with $\mathbf f_{n} \neq \mathbf 0$ and $\Span\{\mathbf f_{0}, \mathbf f_{1},\ldots, \mathbf f_{n} \}=\mathbb R^3$. The coefficient $\mathbf f_{0}$ cannot vanish as $(t-\beta)$ is not a factor of $\mathbf F(t)$ by assumption.

At first sight the proposed problem seems to be easy. Any rational $\mathbf r(t)$ satisfying \eqref{eq:1} and therefore tangent to $\mathbf F(t)$ can be expressed in the form
\begin{equation}
  \label{eq:3}
  \mathbf r(t)=\int \lambda(t)\mathbf F(t) \;\mathrm{d}t + \mathbf c,
\end{equation}
where $\lambda(t)\in \mathbb R(t)$ is a rational function and $\mathbf c \in \mathbb R^3$. Because of $\mathbf r'(t) = \lambda(t) \mathbf F (t)$, the rational function $\lambda(t)$ accounts for the differences in parametric speed among all rational curves with tangent field parallel to $\mathbf F(t)$. We therefore refer to it as the \emph{speed function} (even if it is not the speed function known from differential geometry).

The issue with formula \eqref{eq:3} is due to the fact that not all $\lambda(t)$ provide rational curves as integration may produce irrational functions (logarithms and arctangents). This lead to alternative approaches, namely the dual (envelope based) approach of \cite{FaroukiSir,krajnc1,FaroukiSir2} and the kinematics based approach of~\cite{kalkan22,schroecker22}.

Nonetheless, the functions $\lambda(t)$ providing rational curves form a linear subspace $\mathcal L \subset \mathbb R(t)$ of rational functions. There is a simple way to interpret formula \eqref{eq:3} as a bijective linear mapping from the Cartesian product of $\mathcal L$ and $\mathbb R^3$ onto the vector space $\mathcal R$ that is $\mathcal L \times \mathbb R^3 \to \mathcal R$, $(\lambda(t), \mathbf c) \mapsto \mathbf r(t)$.\footnote{In order to make this a well-defined map, it is necessary to normalize (fix one representative) of the expression $\int \lambda(t)\mathbf F(t) \;\mathrm{d}t$. One possibility for doing so is to require that $\mathbf F(\beta) = 0$ for some given $\beta \in \mathbb R$. Another way is to use the partial fraction decomposition (see infra) and require that all the integrals for individual roots have zero absolute term.}

In order to determine the rational functions $\lambda(t) \in \mathcal L$ let us consider
\begin{lemma}[Partial fraction decomposition for rational functions]
  \label{lem:pfd}
  If $\lambda(t) \in \mathbb R(t)$ is a rational function whose denominator in reduced form can be written as $\prod_{i=1}^m (t-\beta_i)^{k_i}$ with $k_i \in \mathbb N$ and pairwise different values $\beta_1$, $\beta_2$, \ldots, $\beta_m \in \mathbb C$, there exists a unique polynomial $p(t) \in \R[t]$ and coefficients $\lambda_{i,j} \in \mathbb C$ such that
  \begin{equation}
    \label{eq:4}
    \lambda(t)=p(t)+\sum_{i=1}^m \sum_{j=-k_i}^{-1} \lambda_{i,j}(t-\beta_i)^j.
  \end{equation}
\end{lemma}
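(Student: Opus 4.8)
The plan is to prove existence and uniqueness separately, reading the coefficients $\lambda_{i,j}$ directly off the Laurent expansions of $\lambda$ at its poles. This complex-analytic viewpoint fits the residuum philosophy of the present paper and makes the principal parts into canonical objects rather than the solution of an auxiliary linear system. Throughout I write $\lambda(t)=N(t)/D(t)$ with $N,D$ coprime and $D(t)=\prod_{i=1}^m(t-\beta_i)^{k_i}$, so that the poles of $\lambda$ are exactly the $\beta_i$, each of order $k_i$.

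For existence, I would expand $\lambda$ in a Laurent series about each $\beta_i$ and collect its \emph{principal part} $\sum_{j=-k_i}^{-1}\lambda_{i,j}(t-\beta_i)^j$; because the pole has order exactly $k_i$, no term below $(t-\beta_i)^{-k_i}$ appears, which is precisely why the inner summation index in \eqref{eq:4} starts at $-k_i$. Setting
\[
  g(t)=\lambda(t)-\sum_{i=1}^m\sum_{j=-k_i}^{-1}\lambda_{i,j}(t-\beta_i)^j,
\]
the subtraction cancels the principal part at every $\beta_i$, so $g$ is a rational function with no finite poles. A rational function in reduced form without finite poles must have a constant denominator by the fundamental theorem of algebra, hence $g=p(t)$ is a polynomial, and this is the desired $p$.

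The one point that genuinely needs care is that $p\in\R[t]$ rather than merely $\C[t]$. Since $\lambda\in\mathbb R(t)$, its non-real poles occur in conjugate pairs $\beta_i,\bar\beta_i$ carrying conjugate principal parts; consequently the subtracted double sum takes real values on $\R$, so $g$ does too, forcing its coefficients to be real. I would spell this conjugation bookkeeping out explicitly, as it is the only place where the hypothesis $\lambda\in\mathbb R(t)$ (as opposed to $\lambda\in\C(t)$) actually enters the argument; the coefficients $\lambda_{i,j}$ themselves remain genuinely complex, matching the statement.

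For uniqueness I would invoke the uniqueness of the Laurent coefficients: if two representations of the form \eqref{eq:4} agreed, their difference would exhibit a polynomial as identically equal to a finite combination of strictly negative powers of the $(t-\beta_i)$. Comparing principal parts at each $\beta_i$ then forces every $\lambda_{i,j}$ to equal the canonical Laurent coefficient of $\lambda$ at $\beta_i$, after which the two polynomial parts must coincide as well. A purely algebraic alternative for existence — polynomial division to extract $p$, followed by repeated use of Bézout's identity for the pairwise coprime factors $(t-\beta_i)^{k_i}$ and a Taylor expansion of each resulting numerator about $\beta_i$ — is available and could be remarked upon, but I expect no deep obstacle in either route: the difficulty is entirely the conjugate-symmetry bookkeeping needed to land in $\R[t]$.
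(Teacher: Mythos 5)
Your proof is correct, but there is nothing in the paper to compare it against: the paper does not prove Lemma~\ref{lem:pfd} at all. It states the lemma as a well-known fact, cites a complex-analysis textbook \cite{Pathak2015-gk}, and only comments on why the \emph{complex} form of the decomposition (complex roots $\beta_i$, complex coefficients $\lambda_{i,j}$) is preferred over the real one. Your Laurent-expansion argument --- subtract the principal parts at all poles, observe that the remainder is a rational function without finite poles and hence a polynomial, then obtain uniqueness from uniqueness of Laurent coefficients --- is precisely the standard proof such a citation points to, and it sits naturally alongside the paper's residuum-based machinery. One small item to fold into your ``conjugation bookkeeping'': to conclude that $g$ is real-valued on $\R$ you need not only that non-real poles pair up with conjugate principal parts, but also that the principal parts at the \emph{real} poles $\beta_i \in \R$ are real-valued, i.e.\ that the Laurent coefficients of a real rational function at a real pole are real. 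This is immediate (the function $(t-\beta_i)^{k_i}\lambda(t)$ is a real rational function regular at $\beta_i$, so its Taylor coefficients there are real), but the conjugate-pair argument alone does not cover it. Alternatively, apply the map $t \mapsto \overline{\lambda(\bar t\,)}$ to the entire decomposition and invoke your uniqueness step once: this simultaneously yields conjugate pairing at non-real poles, realness of the $\lambda_{i,j}$ at real poles, and $p \in \R[t]$.
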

It will become clear in the remainder of this article why we prefer a formulation of this well-known result (see e.g. \cite{Pathak2015-gk}) in terms of the complex zeros $\beta_i$ and hence likewise complex coefficients $\lambda_{i,j}$. Suitably combining pairs of conjugate complex summands will result in the real partial fraction decomposition.

Combining \eqref{eq:2} and \eqref{eq:4} we see that only coefficients $\lambda_{i,j}$ with $(-n-1)\leq j\leq -1$ influence the residua with respect to $\beta_i$ of the integrand in \eqref{eq:3} and therefore rationality of $\mathbf r(t)$. This leads us to:

\begin{definition}
  \label{def:spaces}
  For given tangent field $\mathbf F(t) \in \mathbb R^3[t]$ of degree $n$, let us define the following linear subspaces of $\mathcal R$:
  \begin{enumerate}
  \item The space $\mathcal P$ containing all \emph{polynomial} PH curves. Its elements are obtained from \eqref{eq:3} for polynomial $\lambda(t) \in \mathbb R[t]$. It is of infinite (but countable) dimension and contains the three-dimensional subspace $\mathbb R^3$ of constant PH curves.\footnote{In literature on PH curves, these constant curves are often disregarded but in our context it is very natural to consider them as valid solutions to \eqref{eq:1}. Their presence accounts for the translation invariance of $\mathcal R$ and~$\mathcal P$.} A basis of $\mathcal P$ is given by taking a basis of $\mathbb R^3$ and adding all curves obtained from $\lambda(t) = (t-\beta)^p$ for $p \ge 0$ and some fixed $\beta \in \mathbb R$.
  \item The one-dimensional space $\mathcal R^{\beta,r}$ of \emph{regular rational PH curves,} spanned by the rational PH curve obtained from \eqref{eq:3} for $\lambda(t) = (t - \beta)^r$, $\mathbf c = 0$, $\beta \in \mathbb C$, and $r < -n-1$.
  \item Spaces of \emph{non-regular rational PH curves} $\mathcal N^{\beta}$ obtained from \eqref{eq:3} for
    \begin{equation}
      \label{eq:5}
      \lambda(t)=\sum_{j=-n-1}^{-1} \lambda_{j}(t-\beta)^j,
    \end{equation}
    where $\mathbf c = 0$, $\beta \in \mathbb C$ and $\lambda_{-n-1}$, \ldots, $\lambda_{-1} \in \C$ are subject to the zero-residuum constraint $\lambda_{-1}\mathbf f_{0}+\lambda_{-2}\mathbf f_{1}+\lambda_{-3}\mathbf f_{2}+ \cdots +\lambda_{-n-1}\mathbf f_{n}=0$ that ensures rationality of the integral \eqref{eq:3}.
\end{enumerate}
\end{definition}

As a direct consequence of Lemma~\ref{lem:pfd} and Definition~\ref{def:spaces} we obtain the following description of the full solution space $\mathcal R$
\begin{theorem}
  The space $\mathcal R$ of rational curves of given polynomial direction field allows the decomposition
  \begin{equation}
    \label{eq:6}
    \mathcal {R}=  \mathcal P \oplus \bigoplus_{\beta \in \mathbb C}\left (\mathcal N^{\beta} \oplus \bigoplus_{j=-\infty}^{-n-1} {\mathcal R}^{\beta,j}\right ).
   \end{equation}
   as direct sum of polynomial solutions as well as regular and non-regular solutions.
\end{theorem}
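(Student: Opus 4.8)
The plan is to transport Lemma~\ref{lem:pfd} through the linear bijection $(\lambda(t),\mathbf c)\mapsto\mathbf r(t)$ of $\mathcal L\times\R^{3}$ onto $\mathcal R$, reducing \eqref{eq:6} to a splitting of the space $\mathcal L$ of admissible speed functions. First I would record the classical fact that a rational function possesses a rational antiderivative if and only if all of its residua vanish: in a partial fraction decomposition one has $\int(t-\beta)^{j}\,\mathrm dt=(t-\beta)^{j+1}/(j+1)$ for $j\neq-1$, while the terms $(t-\beta)^{-1}$ are exactly the ones integrating to logarithms. Applied to each component of the vector valued integrand in \eqref{eq:3}, this says $\lambda(t)\in\mathcal L$ precisely when $\operatorname{Res}_{t=\beta}\!\bigl(\lambda(t)\mathbf F(t)\bigr)=\mathbf 0\in\C^{3}$ for every $\beta\in\C$.

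Next I would feed the partial fraction decomposition $\lambda(t)=p(t)+\sum_{\beta}\ell^{\beta}(t)$ of Lemma~\ref{lem:pfd} into this criterion, with the summands grouped by their finitely many distinct poles and $\ell^{\beta}(t)=\sum_{j<0}\lambda_{\beta,j}(t-\beta)^{j}$ denoting the principal part at $\beta$. Since $p(t)\mathbf F(t)$ is a polynomial and $\ell^{\beta'}(t)$ is regular at $\beta$ whenever $\beta'\neq\beta$, only $\ell^{\beta}(t)\mathbf F(t)$ contributes to the residuum at $\beta$, so the contributions from distinct poles are independent. Substituting the expansion \eqref{eq:2} gives $\operatorname{Res}_{t=\beta}\!\bigl(\lambda(t)\mathbf F(t)\bigr)=\sum_{k=0}^{n}\lambda_{\beta,-1-k}\mathbf f_{k}$, which --- because $\deg\mathbf F=n$ --- involves only $\lambda_{\beta,-1},\dots,\lambda_{\beta,-n-1}$ and is exactly the zero-residuum constraint of Definition~\ref{def:spaces}. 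Hence $\lambda\in\mathcal L$ holds iff each principal part $\ell^{\beta}$ satisfies that constraint on its own, and this exhibits the internal direct sum
\[
  \mathcal L=\R[t]\ \oplus\ \bigoplus_{\beta\in\C}\Bigl(\mathcal L^{\beta}_{\mathrm{low}}\ \oplus\ \bigoplus_{j<-n-1}\C\,(t-\beta)^{j}\Bigr),
\]
where $\mathcal L^{\beta}_{\mathrm{low}}$ consists of the principal parts $\sum_{j=-n-1}^{-1}\lambda_{\beta,j}(t-\beta)^{j}$ obeying that constraint, while a single monomial $(t-\beta)^{j}$ with $j<-n-1$ already lies in $\mathcal L$ because then $(t-\beta)^{j}\mathbf F(t)$ has all exponents $\le-2$.

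It remains to transport this splitting through the bijection. Normalizing the integral as in the footnote to \eqref{eq:3}, the summand $\R[t]\times\R^{3}$ maps onto $\mathcal P$ (the polynomial $p(t)$ integrates to a polynomial curve and $\R^{3}$ supplies the constant curves contained in $\mathcal P$), the summand $\mathcal L^{\beta}_{\mathrm{low}}$ maps onto $\mathcal N^{\beta}$ by the definition of that space, and each line $\C\,(t-\beta)^{j}$ with $j<-n-1$ maps onto $\mathcal R^{\beta,j}$, genuinely one-dimensional since $\mathbf f_{n}\neq\mathbf 0$ keeps $\int(t-\beta)^{j}\mathbf F(t)\,\mathrm dt$ nonzero. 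As a linear bijection the map carries the internal direct sum of $\mathcal L\times\R^{3}$ onto the internal direct sum \eqref{eq:6}, and directness --- trivial pairwise intersections and uniqueness of representation --- is precisely the uniqueness clause of Lemma~\ref{lem:pfd} together with injectivity of the map. That the outer index set is all of $\C$ and the inner one is infinite is harmless: every curve has a speed function with finitely many poles of finite order, so only finitely many summands are nonzero, which is exactly why a direct sum and not a direct product is the right object.

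The one step calling for genuine care --- and essentially the only obstacle --- is the reality bookkeeping at non-real poles, where the generators $(t-\beta)^{j}$ are not real rational functions. There one should read $\mathcal N^{\beta}$ and $\mathcal R^{\beta,j}$ inside the complexification $\mathcal R\otimes\C$, on which complex conjugation interchanges the data at $\beta$ and $\overline\beta$ while fixing $\mathcal P$; the real space $\mathcal R$ is then the conjugation-invariant part, and replacing each conjugate pair $\mathcal N^{\beta}\oplus\mathcal N^{\overline\beta}$ (respectively $\mathcal R^{\beta,j}\oplus\mathcal R^{\overline\beta,j}$) by the real and imaginary parts of its elements recovers the honest real partial fraction summands. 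For real $\beta$ no such pairing is needed, and once this convention is fixed the residuum computation and the identifications with Definition~\ref{def:spaces} are routine.
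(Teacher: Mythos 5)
Your argument is correct and follows exactly the route the paper intends: the paper states the theorem as a "direct consequence" of Lemma~\ref{lem:pfd} and Definition~\ref{def:spaces} without further proof, and your write-up supplies precisely the missing details (the residuum criterion for rational integrability, the locality of residua to each pole, and the transport of the splitting of $\mathcal L\times\R^3$ through the bijection onto $\mathcal R$). Your closing remark on the conjugate-pair bookkeeping at non-real $\beta$ is a welcome precision that the paper only hints at in the sentence following Lemma~\ref{lem:pfd}.
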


\section{Spaces of Non-Regular Curves}
\label{sec:non-regular}

The structure (dimension, bases, subspaces) of the space $\mathcal{P}$ of polynomial PH curves and the space $\mathcal{R}^{\beta,m}$ of regular rational PH curves is easy to understand. In this section we study the space $\mathcal N^\beta$ of non-regular rational PH curves in more detail. The non-regular solutions are important not only because they complete the structure of $\mathcal R$, but also because they contain the truly rational (non-polynomial) curves of minimal degree \cite{krajnc1,schroecker22}.

Let us fix a root $\beta \in \mathbb C$ and denote by $\mathcal L^\beta\subset \mathcal L$ the linear space of rational functions having at most one singularity $t=\beta$. Our detailed analysis in \cite{schroecker22} can be interpreted as the study of the image of $\mathcal L^\beta$ space under the mapping \eqref{eq:3}, i.e. of the space
\begin{equation*}
  \mathcal {R^{\beta}} \coloneqq  \mathcal P \oplus \mathcal N^{\beta} \oplus \bigoplus_{j=-\infty}^{-n-1} \mathcal R^{\beta,j}
\end{equation*}
for a fixed $\beta$ which is \emph{generic}. Genericity in this context means that any three consecutive coefficients $\mathbf f_{j-1}$, $\mathbf f_{j}$, $\mathbf f_{j+1}$ in the Taylor expansion \eqref{eq:2} of $\mathbf F(t)$ are linearly independent. Consequently we obtained a good understanding of the subspace $\mathcal{N}^{\beta}$ for the generic cases. Its structure in non-generic situations remained, however, unclear, even if an algorithm for computing a basis for any concrete $\beta$ was outlined in \cite{schroecker22}. With the approach in this paper we are able to describe both the generic and non-generic case in a unified way.

\begin{lemma}
  \label{lem:dimN}
  For any $\beta \in \mathbb C$ we have
  \begin{equation*}
    \dim \mathcal N^{\beta} = n - 2.
  \end{equation*}
\end{lemma}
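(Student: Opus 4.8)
The plan is to recognize $\mathcal N^\beta$ as the solution set of a single linear system and then apply the rank--nullity theorem. First I would note that, once a normalization of the antiderivative in \eqref{eq:3} is fixed and $\mathbf c = \mathbf 0$ is imposed, the assignment $\lambda(t) \mapsto \mathbf r(t) = \int \lambda(t)\mathbf F(t)\,\mathrm dt$ is linear in $\lambda$ and injective: from $\mathbf r'(t) = \lambda(t)\mathbf F(t)$ and $\mathbf F(t) \neq \mathbf 0$ one sees that $\lambda \neq 0$ forces $\mathbf r$ to be non-constant, hence nonzero. Therefore $\dim \mathcal N^\beta$ equals the dimension of the space of coefficient vectors $(\lambda_{-1}, \ldots, \lambda_{-n-1})$ that are admissible in \eqref{eq:5}, i.e.\ that satisfy the zero-residuum constraint.

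Next I would read the zero-residuum constraint $\lambda_{-1}\mathbf f_0 + \lambda_{-2}\mathbf f_1 + \cdots + \lambda_{-n-1}\mathbf f_n = \mathbf 0$ as the kernel of the linear map
\[
  T\colon (\lambda_{-1}, \ldots, \lambda_{-n-1}) \longmapsto \sum_{k=0}^{n} \lambda_{-1-k}\,\mathbf f_k
\]
from the $(n+1)$-dimensional coefficient space to the ambient $3$-space. Its image is precisely $\Span\{\mathbf f_0, \mathbf f_1, \ldots, \mathbf f_n\}$, which is all of $\mathbb R^3$ (equivalently, of its complexification) by the standing hypothesis on $\mathbf F$ recorded just after \eqref{eq:2}; thus $T$ is surjective and $\operatorname{rank} T = 3$. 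Rank--nullity then yields $\dim \mathcal N^\beta = \dim \ker T = (n+1) - 3 = n - 2$, uniformly in $\beta$ and with no appeal to genericity.

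The single point that deserves care --- and the only place where the hypotheses that $\mathbf F$ is \emph{truly spatial} and \emph{free of polynomial factors} are actually used --- is the surjectivity $\operatorname{rank} T = 3$, that is, that the Taylor coefficients $\mathbf f_0, \ldots, \mathbf f_n$ of $\mathbf F$ at $\beta$ span the whole space. Were their span only a line or a plane, some fixed (possibly complex) covector would annihilate every $\mathbf f_j$ and hence $\mathbf F(t)$ identically, which makes $\mathbf F$ planar or forces a scalar polynomial factor out of $\mathbf F$ --- both excluded. Since the text already asserts $\Span\{\mathbf f_0, \ldots, \mathbf f_n\} = \mathbb R^3$ immediately below \eqref{eq:2}, I would simply invoke it. I would also remark that $n \ge 2$ is automatic (three vectors are needed to span $\mathbb R^3$), so the formula never returns a negative value, and that for $n = 2$ the map $T$ is an isomorphism and $\mathcal N^\beta = 0$, consistent with the statement.
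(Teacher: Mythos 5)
Your proposal is correct and follows essentially the same route as the paper: count the $n+1$ coefficients in \eqref{eq:5}, observe that the zero-residuum condition \eqref{eq:7} is a homogeneous linear system of rank $3$ because $\Span\{\mathbf f_0,\ldots,\mathbf f_n\}=\mathbb R^3$, conclude the solution space has dimension $n-2$, and note that it is mapped injectively onto $\mathcal N^\beta$ by \eqref{eq:3} with $\mathbf c=\mathbf 0$. Your version merely spells out the injectivity and the rank argument in slightly more detail than the paper does.
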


\begin{proof}
  For any fixed $\beta \in \mathbb C$ the space of rational functions of the form \eqref{eq:5} has dimension $n+1$. With $\mathbf F(t) = \sum_{j=0}^n \mathbf f_j(t-\beta)^j$, the zero-residuum condition reads
  \begin{equation}
    \label{eq:7}
    \lambda_{-1}\mathbf f_{0}+\lambda_{-2}\mathbf f_{1}+\lambda_{-3}\mathbf f_{2}+ \cdots +\lambda_{-n-1}\mathbf f_{n}=0.
  \end{equation}
  This is a homogeneous system of rank $3$ (because $\mathbf{F}(t)$ is assumed to be spatial) with $n+1$ variables. Its solution space is thus of dimension $n- 2$. This space is injectively mapped onto $\mathcal{N}^\beta$ via \eqref{eq:3} with~$\mathbf c = 0$.
\end{proof}

In order to obtain a basis of $\mathcal N^{\beta}$ we describe the solution space of \eqref{eq:7} and then map it linearly to the curve space. The most natural way is to choose three linearly independent coefficients $\mathbf f_{i_1}$, $\mathbf f_{i_2}$, $\mathbf f_{i_3}$ where $0 \le i_1$, $i_2$, $i_3 \le n$ and express the corresponding unknowns $\lambda_{(-1-i_1)}$, $\lambda_{(-1-i_2)}$, $\lambda_{(-1-i_3)}$ in terms of the remaining $\lambda_i$'s which are considered to be free parameters.

The curve $\mathbf r(t)$ given by \eqref{eq:3} with $\lambda(t)$ of the form \eqref{eq:5} is rational under the zero residuum condition \eqref{eq:7} and can then be expressed explicitly in a very symmetrical way as
\begin{equation*}
  \mathbf r(t)=\sum_{j=-n}^{n} \mathbf r_j(t-\beta)^j,
\end{equation*}
where
\begin{equation}\label{eq:8}
  \begin{aligned}
    \mathbf r_{-n}&= -\frac{1}{n}(\lambda_{-n-1}\mathbf f_0),\\
    \mathbf r_{-n+1}&= -\frac{1}{n-1}(\lambda_{-n}\mathbf f_0+\lambda_{-n-1}\mathbf f_1),\\
    \mathbf r_{-n+2}&= -\frac{1}{n-2}(\lambda_{-n+1}\mathbf f_0+\lambda_{-n}\mathbf f_1+\lambda_{-n-1}\mathbf f_2),\\
    \vdots & \\
    \mathbf r_{0}&= \mathbf 0,\\
    \vdots & \\
    \mathbf r_{n-2}&= \frac{1}{n-2}(\lambda_{-1}\mathbf f_{n-2}+\lambda_{-2}\mathbf f_{n-1}+\lambda_{-3}\mathbf f_{n}),\\
    \mathbf r_{n-1}&= \frac{1}{n-1}(\lambda_{-1}\mathbf f_{n-1}+\lambda_{-2}\mathbf f_n),\\
    \mathbf r_{n}&= \frac{1}{n}(\lambda_{-1}\mathbf f_n).
  \end{aligned}
\end{equation}

\begin{remark}
  For fixed $\mathbf F$ and $\beta$ the mapping
  \begin{equation*}
(\lambda_{-n-1}, \ldots \lambda_{-1}) \to (\mathbf r_{-n}, \ldots, \mathbf r_n)
  \end{equation*}
  is an injection from $\mathcal L^\beta$ into $\mathcal N^\beta \subset \mathcal R^\beta$.
\end{remark}

There is a good reason to choose the coefficients $\lambda_{-1}$, $\lambda_{-2}$, $\lambda_{-3}$ as dependent variables, provided that ${\mathbf f}_{0}$, ${\mathbf f}_1$, ${\mathbf f}_{2}$ are linearly independent. This way the coefficients $\lambda_{-n-1}, \ldots,\lambda_{-5}$ can be set to $0$ ensuring that the lowest curve coefficients $\mathbf r_{-n}, \ldots,\mathbf r_{-4}$ vanish. The coefficient $\lambda_{-4}$ must be nonzero because otherwise the solution would be trivial. The three coefficients $\lambda_{-3}$, $\lambda_{-2}$, $\lambda_{-1}$ are computed linearly from \eqref{eq:8}. This leads to a rational PH curve with minimal multiplicity of $\beta$ as zero of the denominator and is the generic case of \cite[Theorem~4.6]{kalkan22}.

If the coefficients ${\mathbf f}_{0}$, ${\mathbf f}_1$, ${\mathbf f}_{2}$ are linearly dependent (non-generic case) some other three coefficients must be chosen as dependent. There is a natural strategy how to generally obtain a basis of~$\mathcal N^{\beta}$:

\begin{remark}[Canonical basis of $\mathcal N^{\beta}$]
  \label{rem:canonical-basis}
Let $(i_1, i_2, i_3)$ be a triplet of indices so that ${\mathbf f}_{i_1}$, ${\mathbf f}_{i_2}$, ${\mathbf f}_{i_3}$ are linearly independent. Denote as
  \begin{equation*}
    D= \{\lambda_{(-1-i_1)},\lambda_{(-1-i_2)},\lambda_{(-1-i_3)}\}
  \end{equation*}
  the set of three dependent variables and
  \begin{equation*}
    I=\{\lambda_{-1}, \ldots, \lambda_{-n-1}\}\setminus D
  \end{equation*}
  the set of $n-2$ free variables. The basis curves are obtained by setting all variables of $I$ to $0$ except for one which is set to $1$, compute the variables of $D$ from \eqref{eq:7} and substitute the values into \eqref{eq:8}. In order to make this construction canonic, an a-priori rule for selecting the triplet $(i_1,i_2,i_3)$ is needed. It would be possible to pick the minimal or maximal indices with respect to a lexicographic ordering. In our examples (and in particular in Examples~\ref{ex:1}-\ref{ex:4} below) we usually have $n = \deg \mathbf F(t) = 4$ and select $(i_1,i_2,i_3) = (1,2,3)$ as this choice conforms with the constructions of~\cite{schroecker22}.
\end{remark}

We will demonstrate various cases on a series of examples.

\begin{example}
  \label{ex:1}
  Consider $\beta=0$ and
  \begin{equation*}
    \mathbf F(t)=
    \begin{pmatrix}
      1 \\
      1 \\
      1
    \end{pmatrix}
    +
    \begin{pmatrix}
      1 \\
      0 \\
      -1
    \end{pmatrix}
    t+
    \begin{pmatrix}
      1 \\
      1 \\
      0
    \end{pmatrix}
    t^2+
    \begin{pmatrix}
      0 \\
      1 \\
      -1
    \end{pmatrix}
    t^3+
    \begin{pmatrix}
      1 \\
      -1 \\
      1
    \end{pmatrix}
    t^4,
  \end{equation*}
  for which any three of the coefficients $\mathbf f_0$, $\mathbf f_1$, $\mathbf f_2$, $\mathbf f_3$, $\mathbf f_4$ are linearly independent. The space $\mathcal N^\beta$ has dimension $\deg \mathbf F(t) - 2 = 2$. A possible basis is $(\mathbf r_1(t), \mathbf r_2(t))$ with
  \begin{equation*}
    \small
    \begin{aligned}
      \mathbf r_1(t)&=
                      \begin{pmatrix}
                        -3 \\
                        -3 \\
                        -3
                      \end{pmatrix}
                      t^{-4}+
                      \begin{pmatrix}
                        -10 \\
                        -6 \\
                        -2
                      \end{pmatrix}
                      t^{-3}+
                      \begin{pmatrix}
                        -12 \\
                        -3 \\
                        12
                      \end{pmatrix}
                      t^{-2}+
                      \begin{pmatrix}
                        -6 \\
                        -24 \\
                        12
                      \end{pmatrix}
                      t^{-1}+
                      \begin{pmatrix}
                        12 \\
                        -30 \\
                        24
                      \end{pmatrix}
                      t+
                      \begin{pmatrix}
                        -3 \\
                        0 \\
                        0
                      \end{pmatrix}
                      t^2+
                      \begin{pmatrix}
                        -2 \\
                        2 \\
                        -2
                      \end{pmatrix}
                      t^3,\\
      \mathbf r_2(t)&=
                      \begin{pmatrix}
                        -2 \\
                        -2 \\
                        -2
                      \end{pmatrix}
                      t^{-3}+
                      \begin{pmatrix}
                        6 \\
                        9 \\
                        12
                      \end{pmatrix}
                      t^{-2}+
                      \begin{pmatrix}
                        6 \\
                        -12 \\
                        -24
                      \end{pmatrix}
                      t^{-1}+
                      \begin{pmatrix}
                        24 \\
                        -18 \\
                        12
                      \end{pmatrix}
                      t+
                      \begin{pmatrix}
                        -3 \\
                        18 \\
                        -12
                      \end{pmatrix}
                      t^2
                      +
                      \begin{pmatrix}
                        2 \\
                        2 \\
                        -2
                      \end{pmatrix}
                      t^3+
                      \begin{pmatrix}
                        3 \\
                        -3 \\
                        3
                      \end{pmatrix}
                      t^4.
    \end{aligned}
  \end{equation*}
  It is obtained from \eqref{eq:3} for
  \begin{equation*}
    \lambda_1(t) = 12t^{-5} + 18t^{-4}-6t^{-3}-6t^{-2}
    \quad\text{and}\quad
    \lambda_2(t) = 6t^{-4}-18t^{-3}+6t^{-2}+12t^{-1},
  \end{equation*}
  respectively.

  The pattern of non-vanishing Laurent coefficients of $\mathbf r_1(t)$ and $\mathbf r_2(t)$ at $t = \beta$ in above example is displayed in Figure~\ref{figEx} (top right) together with the patterns for the corresponding rational functions $\lambda_1(t)$ and $\lambda_2(t)$ (left) with filled dots. Using non-filled dots, we also display patterns for regular rational solutions and for polynomial solutions. Constant solutions are indicated with crossed dots. Obviously, the indicated PH curves are independent. In \cite{schroecker22} it has been shown that they also generate the spaces of regular and of polynomial solutions, respectively.
\end{example}

\begin{figure}
  \centering
  \includegraphics[]{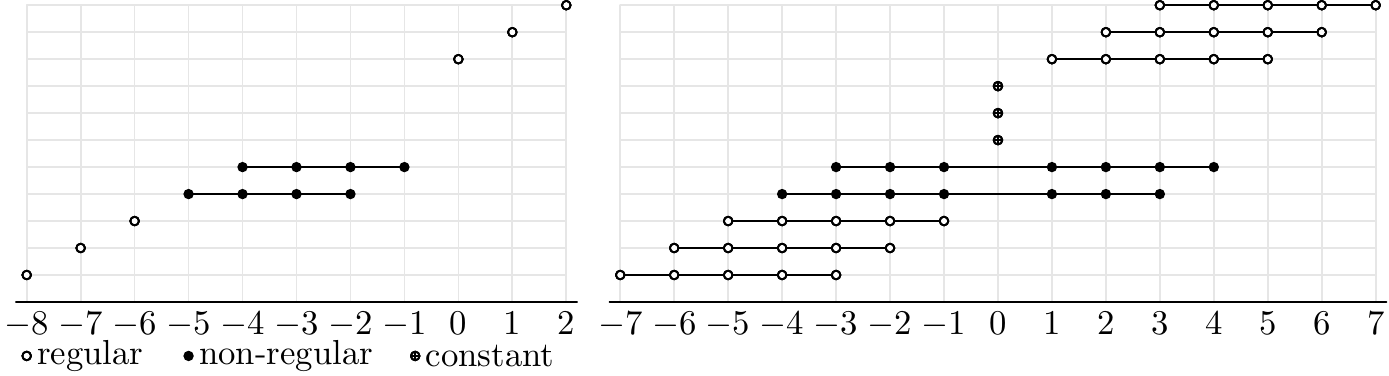}
\par\bigskip
  \includegraphics{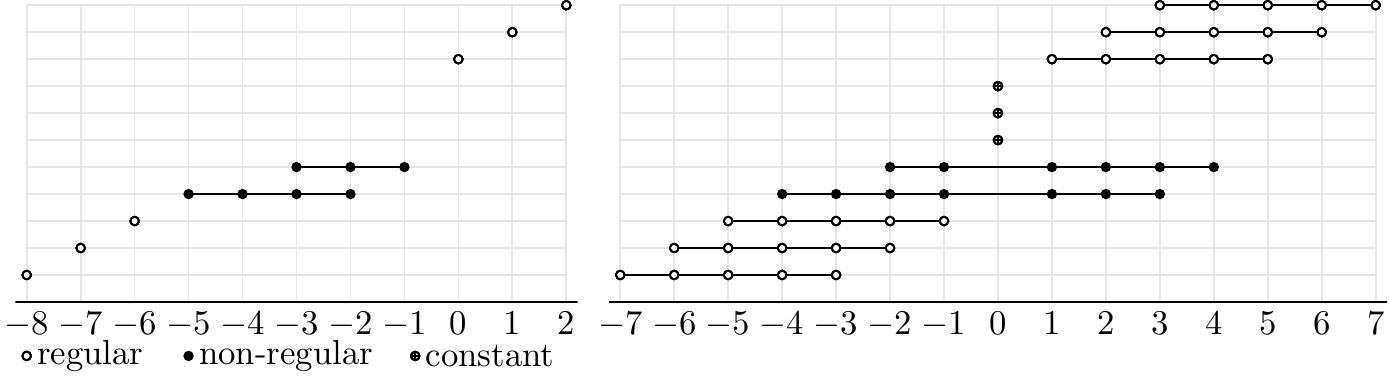}
\par\bigskip
  \includegraphics{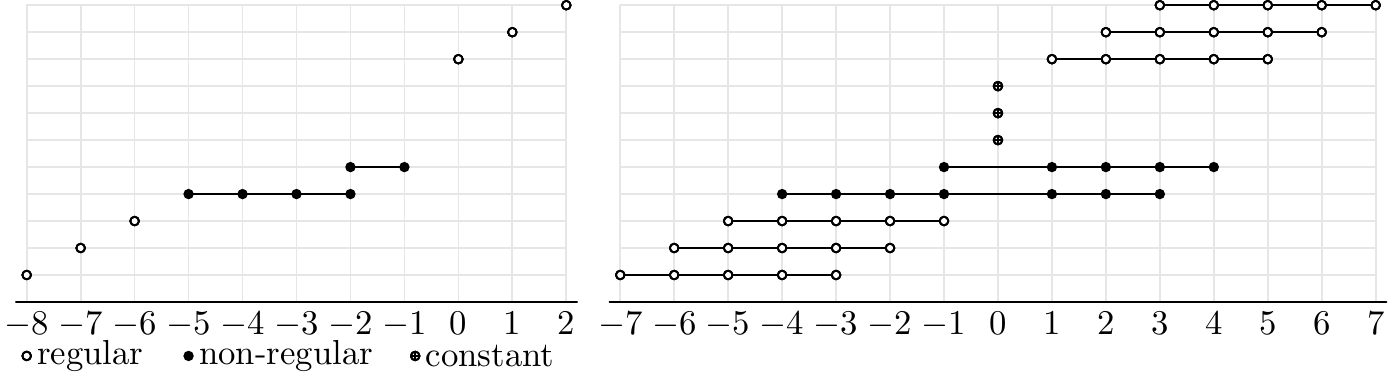}
\par\bigskip
  \includegraphics[]{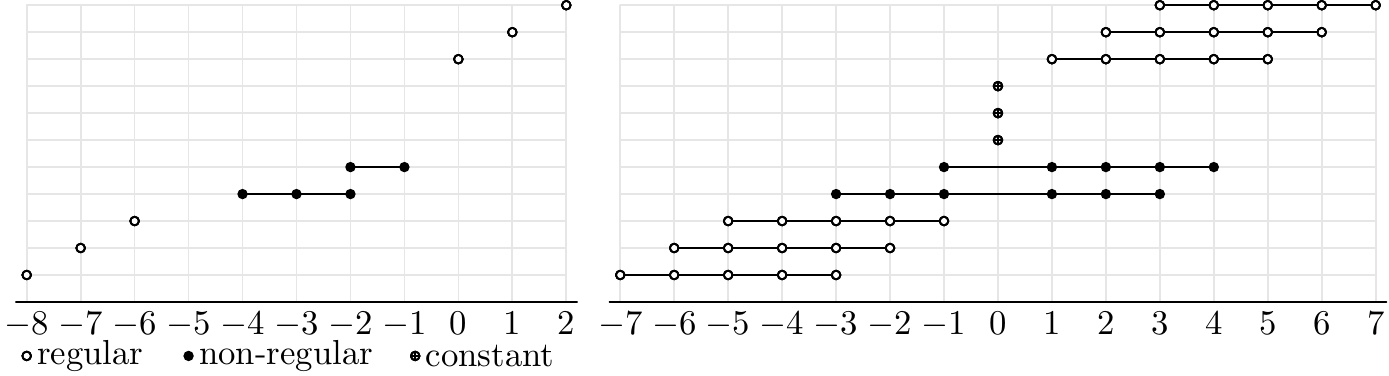}
\caption{Patterns of non-zero coefficients for the bases of $\mathcal L^\beta$ (left) and $\mathcal R^\beta$ (right) for Examples~\ref{ex:1}--\ref{ex:4} (from top to bottom). Note the three additional constant elements on the right.}
  \label{figEx}
\end{figure}

Note that the patterns of non-vanishing Laurent coefficients displayed in Figure~\ref{figEx}, top, are not particular for Example~\ref{ex:1} alone but appear for $\mathbf F(t)$ of degree four and $\beta$ such that any three of the coefficients of \eqref{eq:2} are independent. We will make use of this observation in Section~\ref{sec:examples}.

\begin{example}
  \label{ex:2}
  Consider $\beta=0$ and
  \begin{equation*}
    \mathbf F(t)=
    \begin{pmatrix}
      1 \\
      1 \\
      1
    \end{pmatrix}
    +
    \begin{pmatrix}
      1 \\
      0 \\
      0
    \end{pmatrix}
    t+
    \begin{pmatrix}
      0 \\
      1 \\
      1
    \end{pmatrix}
    t^2+
    \begin{pmatrix}
      1 \\
      1 \\
      0
    \end{pmatrix}
    t^3+
    \begin{pmatrix}
      1 \\
      -1 \\
      1
    \end{pmatrix}
    t^4
  \end{equation*}
  exhibiting the linear dependency of the vectors $\{\mathbf f_0, \mathbf f_1, \mathbf f_2\}$. The space $\mathcal N^\beta$ of non-regular has dimension $2$; a basis $(\mathbf r_1(t), \mathbf r_2(t))$ is given by
  \begin{equation*}
    \small
    \begin{aligned}
      \mathbf r_1(t)&=
                      \begin{pmatrix}
                        -3 \\
                        -3 \\
                        -3
                      \end{pmatrix}
                      t^{-4}+
                      \begin{pmatrix}
                        -12 \\
                        -8 \\
                        -8
                      \end{pmatrix}
                      t^{-3}+
                      \begin{pmatrix}
                        -6 \\
                        0 \\
                        0
                      \end{pmatrix}
                      t^{-2}+
                      \begin{pmatrix}
                        36 \\
                        0 \\
                        12
                      \end{pmatrix}
                      t^{-1}+
                      \begin{pmatrix}
                        12 \\
                        -72 \\
                        -12
                      \end{pmatrix}
                      t+
                      \begin{pmatrix}
                        -24 \\
                        -12 \\
                        -6
                      \end{pmatrix}
                      t^2+
                      \begin{pmatrix}
                        -12 \\
                        12 \\
                        -12
                      \end{pmatrix}
                      t^3,\\
      \mathbf r_2(t)&=
                      \begin{pmatrix}
                        6 \\
                        6 \\
                        6
                      \end{pmatrix}
                      t^{-2}+
                      \begin{pmatrix}
                        24 \\
                        12 \\
                        12
                      \end{pmatrix}
                      t^{-1}+
                      \begin{pmatrix}
                        0 \\
                        -24 \\
                        -12
                      \end{pmatrix}
                      t+
                      \begin{pmatrix}
                        -12 \\
                        6 \\
                        0
                      \end{pmatrix}
                      t^2+
                      \begin{pmatrix}
                        0 \\
                        8 \\
                        -4
                      \end{pmatrix}
                      t^3+
                      \begin{pmatrix}
                        3 \\
                        -3 \\
                        3
                      \end{pmatrix}
                      t^4.
    \end{aligned}
  \end{equation*}
  These basis vectors correspond to $\lambda_1(t) = 12t^{-5} + 24t^{-4} - 12t^{-3} -36t^{-2}$ and $\lambda_2(t) = -12t^{-3} - 12t^{-2} + 12t^{-1}$, respectively. The structure of the space $\mathcal {R^{\beta}}$ in terms of non-vanishing Laurent series coefficients of $\lambda(t)$ (left) and $\mathbf r(t)$ (right) is displayed at Figure~\ref{figEx} (second from top).
\end{example}

\begin{example}
  \label{ex:3}
  Consider $\beta=0$ and
  \begin{equation*}
    \mathbf F(t)=
    \begin{pmatrix}
      1 \\
      1 \\
      1 \\
    \end{pmatrix}
    +
    \begin{pmatrix}
      1 \\
      1 \\
      1
    \end{pmatrix}
    t+
    \begin{pmatrix}
      0 \\
      1 \\
      1
    \end{pmatrix}
    t^2+
    \begin{pmatrix}
      1 \\
      1 \\
      0
    \end{pmatrix}
    t^3+
    \begin{pmatrix}
      1 \\
      -1 \\
      1
    \end{pmatrix}
    t^4
  \end{equation*}
  exhibiting the linear dependency of the vectors $\{\mathbf f_0, \mathbf f_1\}$. The space $\mathcal N^\beta$ has dimension $2$. A basis $(\mathbf r_1(t), \mathbf r_2(t))$ is given by
  \begin{equation*}
    \small
    \begin{aligned}
      \mathbf r_1(t)&=
                      \begin{pmatrix}
                        -1 \\
                        _  -1 \\
                        -1
                      \end{pmatrix}
                      t^{-4}+
                      \begin{pmatrix}
                        -4 \\
                        -4 \\
                        -4
                      \end{pmatrix}
                      t^{-3}+
                      \begin{pmatrix}
                        -8 \\
                        -10 \\
                        -10
                      \end{pmatrix}
                      t^{-2}+
                      \begin{pmatrix}
                        0 \\
                        -8 \\
                        -4
                      \end{pmatrix}
                      t^{-1}+
                      \begin{pmatrix}
                        16 \\
                        -12 \\
                        -4
                      \end{pmatrix}
                      t+
                      \begin{pmatrix}
                        -2 \\
                        -10 \\
                        4
                      \end{pmatrix}
                      t^2+
                      \begin{pmatrix}
                        -4 \\
                        4 \\
                        -4
                      \end{pmatrix}
                      t^3, \\
      \mathbf r_2(t)&=
                      \begin{pmatrix}
                        12 \\
                        12 \\
                        12
                      \end{pmatrix}
                      t^{-1}+
                      \begin{pmatrix}
                        12 \\
                        0 \\
                        0
                      \end{pmatrix}
                      t+
                      \begin{pmatrix}
                        -6 \\
                        0 \\
                        6
                      \end{pmatrix}
                      t^2+
                      \begin{pmatrix}
                        0 \\
                        8 \\
                        -4
                      \end{pmatrix}
                      t^3+
                      \begin{pmatrix}
                        3 \\
                        -3 \\
                        3
                      \end{pmatrix}
                      t^4.
    \end{aligned}
  \end{equation*}
  It corresponds to $\lambda_1(t) = 4t^{-5}+8t^{-4}+8t^{-3}-12t^{-2}$ and $\lambda_2(t) = -12t^{-2}+12t^{-1}$, respectively. The structure of the space $\mathcal {R^{\beta}}$ is displayed at Figure~\ref{figEx}, second from bottom.
\end{example}

\begin{example}
  \label{ex:4}
  Consider $\beta=0$ and
  \begin{equation*}
    \mathbf F(t)=
    \begin{pmatrix}
      1 \\
      1 \\
      1
    \end{pmatrix}
    +
    \begin{pmatrix}
      1 \\
      1 \\
      1
    \end{pmatrix}
    t+
    \begin{pmatrix}
      1 \\
      0 \\
      0
    \end{pmatrix}
    t^2+
    \begin{pmatrix}
      0 \\
      1 \\
      1
    \end{pmatrix}
    t^3+
    \begin{pmatrix}
      1 \\
      -1 \\
      1
    \end{pmatrix}
    t^4
  \end{equation*}
  exhibiting the linear dependency of the vectors $\{\mathbf f_0, \mathbf f_1\}$ and also of the vectors $\{\mathbf f_0, \mathbf f_2, \mathbf f_3\}$. Then the space $\mathcal N^\beta$ has dimension $2$. A basis $(\mathbf r_1(t), \mathbf r_2(t))$ is given by
  \begin{equation*}
    \begin{aligned}
      \mathbf r_1(t)&=
                      \begin{pmatrix}
                        -2 \\
                        -2 \\
                        -2
                      \end{pmatrix}
                      t^{-3}+
                      \begin{pmatrix}
                        -6 \\
                        -6 \\
                        -6
                      \end{pmatrix}
                      t^{-2}+
                      \begin{pmatrix}
                        -6 \\
                        0 \\
                        0
                      \end{pmatrix}
                      t^{-1}+
                      \begin{pmatrix}
                        0 \\
                        0 \\
                        12
                      \end{pmatrix}
                      t+
                      \begin{pmatrix}
                        3 \\
                        -6 \\
                        0
                      \end{pmatrix}
                      t^2+
                      \begin{pmatrix}
                        -2 \\
                        2 \\
                        -2
                      \end{pmatrix}
                      t^3,\\
      \mathbf r_2(t)&=
                      \begin{pmatrix}
                        -12 \\
                        -12 \\
                        -12
                      \end{pmatrix}
                      t^{-1}+
                      \begin{pmatrix}
                        0 \\
                        -12 \\
                        -12
                      \end{pmatrix}
                      t+
                      \begin{pmatrix}
                        -6 \\
                        6 \\
                        6
                      \end{pmatrix}
                      t^2+
                      \begin{pmatrix}
                        4 \\
                        -8 \\
                        0
                      \end{pmatrix}
                      t^3+
                      \begin{pmatrix}
                        -3 \\
                        3 \\
                        -3
                      \end{pmatrix}
                      t^4.
    \end{aligned}
  \end{equation*}
  It corresponds to $\lambda_1(t) = -6t^{-4}+6t^{-3}-6t^{-2}$ and $\lambda_2(t) = 12t^{-2}-12t^{-1}$, respectively. The structure of the space $\mathcal {R^{\beta}}$ is displayed at Figure~\ref{figEx}, bottom. Once more, it is typical for all spaces $\mathcal R^\beta$ of rational PH curves with $\deg \mathbf F(t) = 4$ where $\{\mathbf f_0,\mathbf f_1\}$ as well as $\{\mathbf f_0, \mathbf f_2, \mathbf f_3\}$ are linearly dependent. It is noteworthy that this rather special case already appeared in literature \cite{krajnc3}. We will encounter it again in Section~\ref{sec:examples}.

\end{example}

\section{An Optimal Interpolation Scheme}
\label{sec:interpolation}

We are now going to present an optimal interpolation scheme for the rational curves constructed in the preceding sections. In doing so, we focus on \emph{PH curves} as they have received a lot of attention in literature. Rational PH curves can be defined as rational curves with a rational tangent indicatrix. They are subsumed into our approach for the special choice $\mathbf F(t) = \mathcal A(t) \qi \Cj{\mathcal A}(t)$ of the vector valued polynomial $\mathbf F(t) \in \mathbb R^3[t]$, see \cite{kalkan22} for a more detailed explanation. Here, $\mathcal A(t) \in \mathbb H[t]$ is a polynomial with quaternion coefficients and $\qi$ is a quaternion unit. Note that $\mathcal A(t) \qi \Cj{\mathcal A}(t)$ is vectorial so that it can indeed by identified with a vector valued polynomial in~$\mathbb R^3[t]$.

Our optimal interpolation procedure requires the construction of a vector space $\mathcal Q$ of rational PH curves (``interpolation space'') from which an optimal interpolant is computed. The construction of $\mathcal Q$ and also the optimization routine is based on our zero-residuum approach developed in Section~\ref{sec:residuum} that, with above choice of $\mathbf F(t)$, computes rational PH curves by direct integration. In particular, we select the interpolation space $\mathcal Q$ as subspace of the vector space $\mathcal R$ of rational PH curves whose tangent directions are given by the fixed polynomial $\mathbf F(t) = \mathcal A(t) \qi \Cj{\mathcal A}(t)$. If $(\mathbf q_1(t), \mathbf q_2(t), \ldots, \mathbf q_d(t))$ is a basis of $\mathcal Q$, we may write
\begin{equation}
  \label{eq:9}
  \mathbf r(t) = \sum_{i=1}^d \varrho_i \mathbf q_i(t)
\end{equation}
with coefficients $\varrho_1$, $\varrho_2$, \ldots, $\varrho_d \in \mathbb R$ that are determined by the following:
\begin{description}
\item[Interpolation constraints:] Hermite interpolation constraints in start point $\mathbf r(0)$ and end-point $\mathbf r(1)$, respectively, provide linear constraints on $\varrho_1$, $\varrho_2$, \ldots, $\varrho_d$. By design, the selection of $\mathbf F(t)$, interpolation of derivative data is subject to some limitations. We discuss this in more detail in Section~\ref{sec:interpolation-constraints}
\item[Quadratic functionals:] To the linear interpolation constraints we add one (at most) quadratic functional to be satisfied in optimal sense. The standard functional to be minimized is the integral of the squared norm of the derivative vector but -- due to the PH property of all curves in $\mathcal Q$ -- also minimal or approximate arc-length is possible, cf. Section~\ref{sec:quadratic-optimization}.
\item[Shape properties:] A common and annoying phenomenon in interpolation with curves in dual representation is a tendency for cusps to appear, see e.g. \cite[Example 2]{FaroukiSir2}. With our approach it is possible to avoid this by adding linear inequalities that ensure the absence of zeros of $\mathbf r'(t)$ in the interpolation interval. A crucial role in this context play the basis vector's respective speed functions $\lambda_1(t)$, $\lambda_2(t)$, \ldots $\lambda_d(t)$, cf. Section~\ref{sec:cusp-avoidance}.
\end{description}

Combining interpolation, optimization of quadratic functional, and shape properties in above sense produces a quadratic program that can be solved efficiently by standard procedures.

Let us stress an important advantage of our approach. Quadratic optimization will typically result in an  interpolant $\mathbf r(t)$ in \emph{numeric form.} Since the PH property is inherently of symbolic nature, the numeric (and therefore necessarily approximate) representation might destroy this property and the ability to obtain the rational speed function. But in our case we can still recover the speed function as $\Vert \mathbf r'(t) \Vert = \pm \lambda(t) \mathcal{A}(t) \Cj{\mathcal A}(t)$. Also notice that the absence of zeros of $\lambda(t)$ allows to resolve the sign ambiguity on the interpolation interval.

\subsection{Interpolation}
\label{sec:interpolation-constraints}

Various case of the Hermite interpolation were studied in the context of polynomial Pythagorean hodograph curves, cf. \cite{Juettler:99b,FaroukiSpace2,PHhelices,sir05,SirC2,C2length}. We are concerned with Hermite interpolation in either analytic or geometric sense. In any case, the rational PH curve \eqref{eq:9} is supposed to satisfy $\mathbf r(0) = \mathbf p_0$ and $\mathbf r(1) = \mathbf p_1$ where $\mathbf p_0$, $\mathbf p_1 \in \mathbb R^3$ are given points. Each of these interpolation conditions imposes three independent constraints on the coefficients $\varrho_0$, $\varrho_1$, \ldots, $\varrho_d$.

It is not possible to enforce the interpolation of arbitrary derivative vectors $\mathbf v_0$, $\mathbf v_1$ or tangent directions by a proper choice of the coefficients $\varrho_i$ as the tangent directions are already determined by the choice of $\mathbf F(t)=\mathcal A(t) \qi \Cj{\mathcal A}(t)$. For this reason we first construct $\mathcal A(t)$ so that the tangent directions (and possibly also the osculating planes) are matched. This can be done solving quadratic/linear equations in the domain of quaternions, see e.g. \cite{SirC2}. Comparing to the usual interpolation with polynomial PH curves the requirements put on $\mathcal A(t)$ are relaxed, because the additional degrees of freedom $\rho_i$ can be used in the following way.

Because of
\begin{equation*}
  \mathbf r'(t) = \lambda(t) \mathbf F(t),
\end{equation*}
adjusting the lengths of the derivative vector ($C^1$ interpolation condition) imposes just one linear constraint for start and end velocity, respectively.

The second derivative vector of $\mathbf r(t)$ equals
\begin{equation*}
  \mathbf r''(t) = \lambda'(t) \mathbf F(t) + \lambda(t) \mathbf F'(t).
\end{equation*}
Therefore, exact interpolation of acceleration vectors $\mathbf r''(0) = \mathbf a_0$ and $\mathbf r''(1) = \mathbf a_1$ (in addition to exact interpolation of velocity vectors) is only possible if $\vec{a}_0$ lies on the line through $\lambda(0)\mathbf F'(0)$ and parallel to $\mathbf F(0)$ and similar for $\vec{a}_1$. In this case, acceleration interpolation imposes one linear constraint each.

Combining these observations with the well-known formula
\begin{equation*}
  \varkappa(t) = \frac{\Vert \mathbf r'(t) \times \mathbf r''(t) \Vert}{\Vert \mathbf r' \Vert^3}
\end{equation*}
for the curvature of a parametric space curve \cite[Equation~(3.7)]{banchoff10} we see that $\mathbf r'(0)$ together with $\mathbf F(0)$ and $\mathbf F'(0)$ fully determines the curvature $\varkappa(0)$. In other words, $C^1$ interpolation determines curvature as soon as $\mathbf F(t)$ is fixed.

\subsection{Quadratic Functionals}
\label{sec:quadratic-optimization}

The most common quadratic functionals to be minimized in curve interpolation or approximation problems is likely the curve energy
\begin{equation}
  \label{eq:10}
  E(\mathbf r) = \int_0^1 \langle \mathbf r'(t), \mathbf r'(t) \rangle \;\mathrm{d}t.
\end{equation}
It is quadratic in the interpolation variables $\varrho_0$, $\varrho_1$, \ldots, $\varrho_d$. An interesting alternative in certain applications might be the arc-length
\begin{equation}
  \label{eq:11}
  L(\mathbf r) = \int_0^1 \sqrt{\langle \mathbf r'(t), \mathbf r'(t) \rangle} \;\mathrm{d}t
\end{equation}
which, by the PH property, is piecewise linear in the interpolation variables. Because we are optimizing over a cusp-free range, $\mathbf r'(t)$ does not change sign on the interval $[0,1]$, so that sign ambiguity that is implicit in the square root of \eqref{eq:11} can be resolved and the optimization target functional is even \emph{linear.}

There are some recent attempts of interpolation with (polynomial) PH curves of prescribed length $s > 0$ \cite{farouki16,knez22,zagar23}. Assuming that exact and feasible solutions exist in the interpolation space $\mathcal Q$, they will be found by minimizing the quadratic functional
\begin{equation}
  \label{eq:12}
  L_s(\mathbf r) = (L(\mathbf r) - s)^2
\end{equation}
where $L$ is taken from~\eqref{eq:11}.

We refer the reader to Section~\ref{sec:examples} for examples where each of linear/quadratic functionals \eqref{eq:10}--\eqref{eq:12} is minimized.

\subsection{Cusp Avoidance}
\label{sec:cusp-avoidance}

Any basis vector $\mathbf q_i(t)$ of $\mathcal Q$ satisfies $\mathbf q'_i(t) = \lambda_i(t) \mathbf F(t)$ for the rational speed function $\lambda_i(t)$. By linearity of the derivative, \eqref{eq:9} yields
\begin{equation*}
  \mathbf r'(t) = \lambda(t) \mathbf F(t)
  \quad\text{where}\quad
  \lambda(t) = \sum_{i=1}^d \varrho_i \lambda_i(t).
\end{equation*}
Each of the speed functions $\lambda_i(t)$ can be written in reduced form as $\lambda_i(t) = \nu_i(t)/\delta_i(t)$ with polynomials $\nu_i(t)$ and $\delta_i(t)$. Set $\delta(t) \coloneqq \lcm(\delta_1(t), \delta_2(t), \ldots, \delta_d(t))$ and denote by $\mu_i(t)$ the polynomial implicitly defined by $\lambda_i(t) = \mu_i(t)/\delta(t)$. Then the numerator of $\lambda(t)$ can be computed as
\begin{equation}
  \label{eq:13}
  \mu(t) = \sum_{i=1}^d \varrho_i\mu_i(t).
\end{equation}
In order to avoid cusps, we need to avoid zeros of $\mathbf r'(t)$. These are precisely the zeros of $\mu(t)$. Hence the relevance of this polynomial. For lack of a better name, we will simply refer to it as the ``$\mu$-polynomial of $\mathbf r(t)$''.

Since the interpolation variables $\varrho_i$ appear linearly in \eqref{eq:13}, we do not destroy the quadratic character of our optimization problem by imposing linear or quadratic constraints on the coefficients of $\mu(t)$. A straightforward way to avoid zeros of $\mu(t)$ on the open interval $(0,1)$ is to write $\mu(t) = \sum_{j=0}^m B^m_j(t) m_j$ where $B^m_j(t)$ is the $j$-th Bernstein polynomial of degree $m$ and require all Bernstein coefficients $m_j$ to be either non-negative or non-positive, whatever is suggested by a suitable initial solution. This provides linear constraints on the interpolation variables $\varrho_0$, $\varrho_1$, \ldots, $\varrho_d$. We emphasize that these constitute a sufficient but not a necessary condition for non-negativity of $\mu(t)$ on $(0,1)$. It is possible to loosen the constraints a bit by formally elevating the degree of $\mu(t)$. However, only a minor effect on optimal solutions is to be expected. This is confirmed in our experiments in the subsequent Section~\ref{sec:examples}.

\section{Examples}
\label{sec:examples}

We have so far provided a detailed but rather general description of an optimal interpolation procedure with rational PH curves. Some concrete questions, in particular how to pick a suitable vector space $\mathcal Q$ and a basis $(\mathbf q_1(t), \mathbf q_2(t), \ldots, \mathbf q_d(t))$, shall be answered now at hand of examples. In line with our statement in Section~\ref{sec:interpolation-constraints}, two of them are taken from literature.

All computations in this section were carried out on a standard desktop PC using
a recent version of Maple\texttrademark \cite{maple} and its
\texttt{Optimiziation[QPSolve]} command for solving quadratic programs.

\subsection*{$G^1$ and $C^1$ Interpolation by Slant Helices}

The canonical way of solving $G^1$ or $C^1$ interpolation problems using PH curves \cite{Juettler:99b,FaroukiSpace2,sir05} requires the solution of an algebraic system of equations. These approaches are well-understood and straightforward, and they produce interpolants of minimal degree. However, in the context of this paper, this minimal degree does not provide a significant advantage as it is lost during quadratic optimization (cf. the example in the next Section). If one is willing to accept sub-optimal curve degrees, it is possible to use our approach in a straightforward manner. This will be described at hand of an example.

The data to be interpolated in $G^1$ or $C^1$ sense is
\begin{equation*}
  \mathbf p_0 = \begin{pmatrix} 0 \\ 0 \\ 0 \end{pmatrix},\quad
  \mathbf p_1 = \frac{1}{6}\begin{pmatrix} 3 \\ 6 \\ 2 \end{pmatrix},\quad
  \mathbf v_0 = \begin{pmatrix} 1 \\ 0 \\ 0 \end{pmatrix},\quad
  \mathbf v_1 = \frac{1}{7} \begin{pmatrix} -1 \\ 2 \\ 2 \end{pmatrix}.
\end{equation*}

In order to solve the $G^1$ interpolation problem, we start by computing a quaternion polynomial $\mathcal A(t)$ that satisfies $\mathcal A(0) \times \mathbf p_0 = \mathcal A(1) \times \mathbf p_1 = 0$. Even if restricting ourselves to the case $\deg \mathcal A(t) = 1$ we obtain a one-parametric solution set from which we select
\begin{equation*}
  \mathcal A(t) = 1 - (1 - \qi - \qj - \qk)t
  \quad\text{whence}\quad
  \mathbf F(t) = 4\qk t^2 - 2(\qi - \qj + \qk) t + \qi.
\end{equation*}
The condition $\deg \mathcal A(t) = 1$ ensures that all curve tangents form a constant angle with some fixed direction. Curves with this property are referred to as ``curves of constant slope'' or ``slant helices'' \cite{barros}. Hence the name of this section.

We select an interpolation space $\mathcal Q$ that is spanned by the three constant solutions $\mathbf q_0 = (1, 0, 0)$, $\mathbf q_1 = (0, 1, 0)$, $\mathbf q_2 = (0, 0, 1)$, one rational solution
\begin{equation}
  \label{eq:14}
  \mathbf q_3(t) = \sum_{\substack{k=-3\\k \neq0}}^1 \mathbf q_{t,k}(t - \beta)^k =
  \begin{pmatrix} 3 \\ -2 \\ 6 \end{pmatrix} (t + 1)^{-3} +
  \begin{pmatrix} -3 \\ 3 \\ -15 \end{pmatrix} (t + 1)^{-2} +
  \begin{pmatrix} 0 \\ 0 \\ 12 \end{pmatrix} (t + 1)^{-1}
\end{equation}
to $\beta = -1$ and the polynomial solutions
\begin{equation}
  \label{eq:15}
  \mathbf q_{\ell+2}(t) = \int (n+\ell)t^{\ell-1} \mathbf F(t) \; \mathrm dt,
  \quad
  \ell \in \{2,3,4\}.
\end{equation}
The factor $(n + \ell)$ in \eqref{eq:15} is for normalization purposes only. It ensures that the respective leading coefficients of $\mathbf q_{\ell+2}(t)$ and $\mathbf F(t)$ are equal.

By construction, all basis vectors match start and end tangent of our interpolation problem. The polynomial basis vector $\mathbf q_4(t)$ satisfies $\mathbf q_4(0) = \mathbf p_0$ but $\mathbf q_4(1) \neq \mathbf p_1$. Nonetheless, we use it as to initialize the quadratic program. With this choice, the optimization routine with respect to the energy functional \eqref{eq:11} succeeds and produces the solution $\mathbf{r}_o(t)$ displayed in Figure~\ref{fig:G1-Simple-R1}.

\begin{figure}
  \centering
  \includegraphics[]{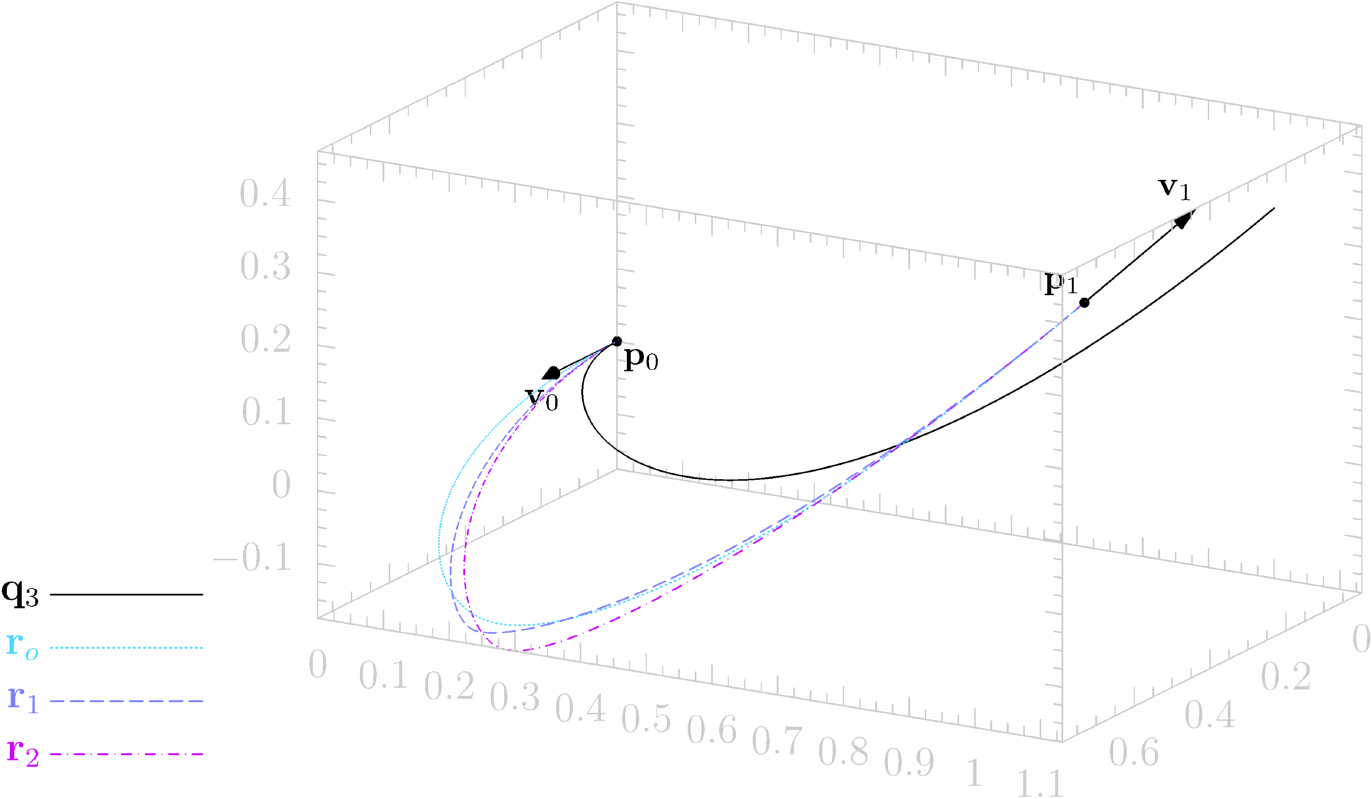}
  \caption{Optimal $G^1$ or $C^1$ interpolation by rational PH curves. The initial ``solution'' $\mathbf q_3(t)$ fails to meet the end-point interpolation condition. The arrows indicating the unit vectors $\mathbf v_0$ and $\mathbf v_1$ are scaled to $20$\% of their actual size.}
  \label{fig:G1-Simple-R1}
\end{figure}

In order to solve the $C^1$ interpolation problem, we enlarge the interpolation space $\mathcal Q$ by adding further polynomial or rational basis vectors. It is natural to use the optimal solution $\mathbf r_o(t)$ of the $G^1$ interpolation problem to initialize the quadratic optimization. With this choice, the optimization routine finds a feasible solution only after augmenting the basis of $\mathcal Q$ by either
\begin{itemize}
\item five further polynomial solutions of the shape \eqref{eq:15} but with $\ell \in \{5, 6, 7, 8\}$ or
\item two further polynomial solutions of the shape \eqref{eq:15} with $\ell \in \{5, 6\}$ and a rational solution similar to \eqref{eq:14} but with denominator root $\beta = -2$.
\end{itemize}
In both cases, it is necessary to elevate the degree of the polynomial $\mu(t)$. The respective optimal solutions $\mathbf r_1(t)$ and $\mathbf r_2(t)$ are displayed in Figure~\ref{fig:G1-Simple-R1} as well. The respective optimal values of the energy functional \eqref{eq:11} are comparable:
\begin{equation*}
  E(\mathbf r_o) = 5.7766,\quad
  E(\mathbf r_1) = 5.8426,\quad
  E(\mathbf r_2) = 6.1102.
\end{equation*}
As expected, the fewer restrictions of $G^1$ interpolation allows for a slightly better value.

The interpolation approach in this example is very straightforward and easy to use. However, a clear disadvantage is the necessity to come up with a feasible initial point. This can be circumvented if one only wishes to improve upon an already existing solution. For the examples to follow, we will take this point of view.

\subsection*{Rational $C^1$ Interpolation}

A general solution to the $C^1$ interpolation problem by quintic polynomial PH curves was presented in \cite{FaroukiSpace2}. The authors of \cite{sir05} suggest to pick a solution from a two-dimensional variety of solution curve that is optimal in the sense of approximation order. We consider the interpolation data
\begin{equation}
  \label{eq:16}
  \mathbf p_0 = \begin{pmatrix}0 \\ 0 \\ 0\end{pmatrix},\quad
  \mathbf p_1 = \frac{1}{11520}\begin{pmatrix}34207 \\ -12208 \\ 22848\end{pmatrix},\quad
  \mathbf v_0 = \frac{1}{2}\begin{pmatrix}12 \\ 5 \\ 0\end{pmatrix},\quad
  \mathbf v_1 = \frac{1}{57600}\begin{pmatrix}316151 \\ -144000 \\ 0\end{pmatrix}
\end{equation}
which results in the optimal quintic PH interpolant
\begin{equation}
  \label{eq:17}
  \mathbf r(t) = \frac{1}{57600}
  \begin{pmatrix}
    -28517t^5+113520t^4+178192t^3-437760t^2+345600t \\
    143472t^5-466704t^4+625072t^3-506880t^2+144000t \\
    -274176t^5+695232t^4-796416t^3+489600t^2
  \end{pmatrix}.
\end{equation}
The associated polynomial $\mathcal A(t)$ is quadratic and reads as
\begin{equation*}
  \mathcal A(t) = \frac{1}{240}
  \bigl(
    (840\qi + 427\qj - 816\qk) t^2  + (-864\qi - 672\qj + 816\qk) t + 600\qi + 120\qj
  \bigr).
\end{equation*}
The degree of the polynomial tangent vector field $\mathbf F(t) \coloneqq \mathcal{A}(t) \qi \Cj{\mathcal{A}(t)}$ equals $n = 4$. We optimize with respect to the curve energy \eqref{eq:10} which, for this example, equals $E(\mathbf r) \approx 17.75$.

\paragraph*{Polynomial PH Curves}

In a first attempt, we pick the interpolation space $\mathcal Q = \mathcal{P}^p$ of polynomial PH curves of degree at most $p \ge n+1 = 5$ to $\mathcal{A}(t)$. A basis for this vector space is given by the three constant vectors $\mathbf q_0 = (1, 0, 0)$, $\mathbf q_1 = (1, 0, 0)$, $\mathbf q_2 = (1, 0, 0)$ plus the polynomial PH curves
\begin{equation}
  \label{eq:18}
  \mathbf q_{\ell+2}(t) = \int (n + \ell)t^{\ell-1} \mathbf F(t) \;\mathrm{d}t,
  \quad \ell \in \{1, 2, \ldots, p - n\}.
\end{equation}
The constant basis vectors correspond to the $\mu$-polynomials $\mu_0(t) = \mu_1(t) = \mu_2(t) = 0$, while we have $\mu_{\ell+2}(t) = (\ell+n)t^{\ell-1}$ for $1 \le \ell \le p-n$. The ``no-cusp constraint'' on $\mu(t) = \sum_{\ell=1}^{p-n+3} \mu_\ell(t)$ thus simply boils down to the non-negativity of $\varrho_3$, $\varrho_4$, \ldots, $\varrho_{p-n+2}$. Since $C^1$ interpolation in our context imposes eight independent linear constraints, we expect solutions different from \eqref{eq:17} for $\dim \mathcal Q \ge 9$, that is $p \ge 6 + n = 10$.

\begin{table}
  \centering
  \begin{tabular}{c|cccccccc}
    \small
    $p$ & $5, \ldots, 9$ & $10$     & $11$     & $12$     & $13$     & $14$    & $\cdots$ & $21$ \\
    \hline
    $\dim \mathcal Q$ & $4,\ldots,8$ & $9$     & $10$     & $11$     & $12$     & $13$    & $\cdots$ & $19$ \\
    $\deg \mathbf r_p$ & $5$      & $10$    & $11$    & $12$    & $13$    & $14$ & $\cdots$    & $21$ \\
    $E(\mathbf r_p)$     & $17.75$  & $17.70$ & $17.62$ & $17.58$ & $17.57$ & $17.56$ & $\cdots$ & $17.55$
  \end{tabular}
  \caption{Dimension of interpolation space $\mathcal Q$ (polynomials only), degree of optimal solution $\mathbf r_p$, and curve energy $E(\mathbf r_p)$.}
  \label{tab:C1-Ex3-P}
\end{table}

Denote the optimal solution to a given value of $p$ by $\mathbf r_p(t)$. Table~\ref{tab:C1-Ex3-P} shows the results of our optimization procedure. As expected, a value of $p \le 9$ just re-produces the original curve $\mathbf r(t)$. Starting with $p = 10$, some improvement can be observed. The gain as measured by the energy functional $E$ is rather small in both, absolute and relative terms, and the data suggests that going beyond $p = 12$ brings hardly any improvement. Figure~\ref{fig:C1-Ex3-P} displays the original solution $\mathbf r(t)$ and the optimal solution $\mathbf r_{12}(t)$ of curve degree~$12$.

\begin{figure}
  \centering
  \includegraphics[]{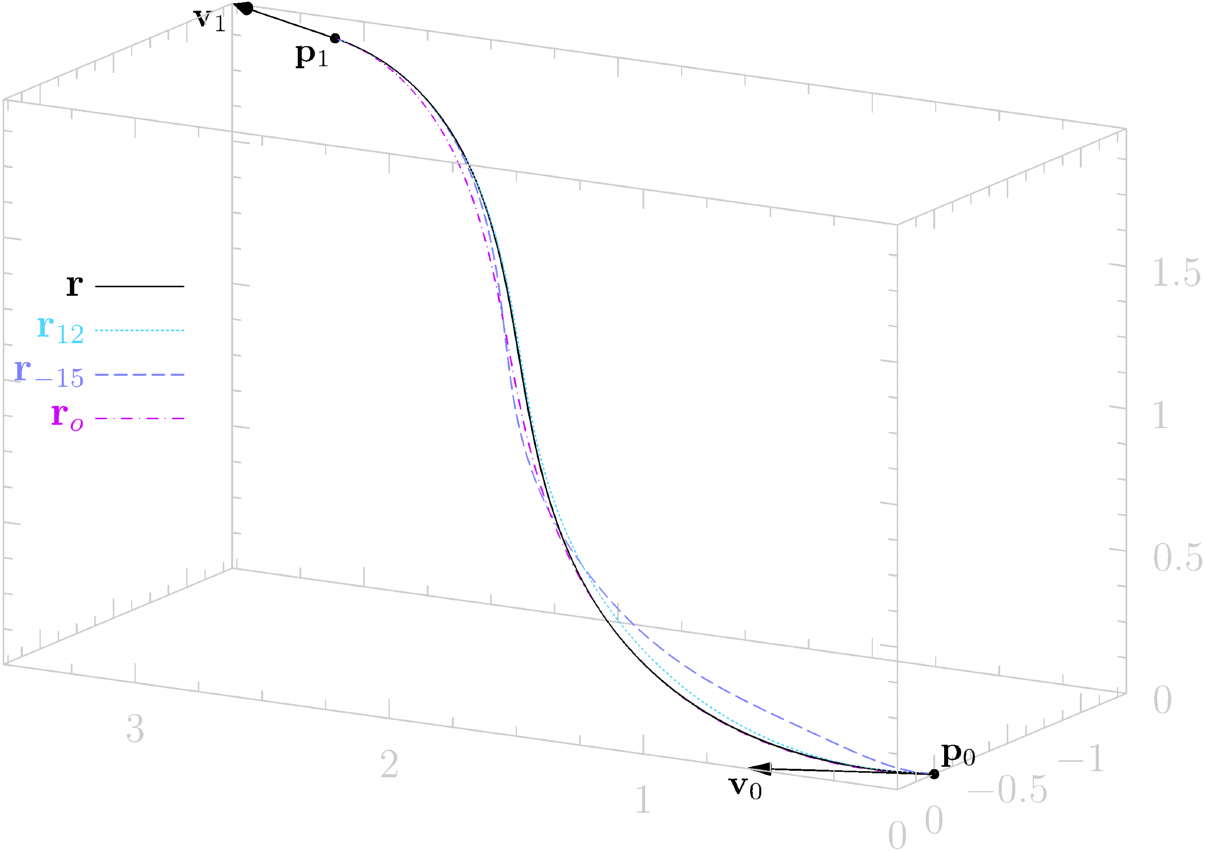}
  \caption{Optimized $C^1$ polynomial interpolant $\mathbf r_{12}(t)$, truly rational interpolant $\mathbf r_{-15}(t)$, and mixed polynomial-rational interpolant $\mathbf r_o(t)$. The original curve is $\mathbf r(t)$. The arrows indicating the unit vectors $\mathbf v_0$ and $\mathbf v_1$ are scaled to 10\% of their actual size.}
  \label{fig:C1-Ex3-P}
\end{figure}

\paragraph{Truly Rational PH Curves}

Now lets try a similar approach using mostly truly rational (non-polynomial) PH curves. We pick a value $\beta \notin [0,1]$, say $\beta = -1$, and, for $p \le -5$, we consider the interpolation space $\mathcal Q = \mathcal R^p$ with basis $(\mathbf q_1, \mathbf q_2, \ldots, \mathbf q_{2-p})$ where
\begin{itemize}
\item $\mathbf q_1 = (1, 0, 0)$, $\mathbf q_2 = (1, 0, 0)$, $\mathbf q_3 = (1, 0, 0)$ are the three constant basis vectors,
\item the two curves
  \begin{equation*}
    \mathbf q_4(t) = \sum_{\substack{k=-3\\k \neq0}}^4 \mathbf q_{4,k}(t-\beta)^k,
    \quad
    \mathbf q_5(t) = \sum_{\substack{k=-4\\k \neq0}}^3 \mathbf q_{5,k}(t-\beta)^k,
  \end{equation*}
  normalized by $\mathbf q_4(\beta) = \mathbf q_5(\beta) = \mathbf F(\beta)$, span the space of non-regular solutions (we are in a generic situation, similar to Example~\ref{ex:1}),
\item the curves
  \begin{equation*}
    \mathbf q_\ell(t) = \sum_{k=1-\ell}^{4-\ell} \mathbf q_{\ell,k}(t-\beta)^k,
    \quad
    \ell \in \{6, 7, \ldots, 1-p\}
  \end{equation*}
  span the space of regular solutions, and \item $\mathbf q_{2-p}(t)$ is the original curve $\mathbf r(t)$ given in~\eqref{eq:17}.
\end{itemize}
We add the polynomial solution $\mathbf q_{2-p}(t) = \mathbf r(t)$ to our interpolation space in order to ensure that our optimal solution really improves on it. It also is a feasible initial solution. The interpolation space is of dimension $\dim \mathcal Q = 2-p$. Its elements are PH curves with a Laurent expansion $\sum_{k=p}^5 \mathbf r_k(t-\beta)^k$ at $t = \beta$.

The $\mu$-polynomials corresponding to this basis are as follows:
\begin{itemize}
\item The constant basis vectors have vanishing $\mu$-polynomials, that is, $\mu_1(t) = \mu_2(t) = \mu_3(t) = 0$.
\item We have
  \begin{equation*}
    \lambda_4(t) = \sum_{k=-5}^{-2} \lambda_{4,k}(t-\beta)^k
    \quad\text{and}\quad
    \lambda_5(t) = \sum_{k=-4}^{-1} \lambda_{5,k}(t-\beta)^k
  \end{equation*}
  by Example~\ref{ex:1} (cf. also Figure~\ref{figEx}, top). Thus, $\mu_4(t) = \sum_{k=-5}^{-2}\lambda_{4,k}(t-\beta)^{k+p}$ and $\mu_5(t) = \sum_{k=-4}^{-1}\lambda_{5,k}(t-\beta)^{k+p}$.
\item We have $\lambda_\ell(t) = (t-\beta)^{-\ell}$ and therefore $\mu_\ell(t) =
  (t-\beta)^{p-\ell}$ for $\ell \in \{6,7, \ldots, 1-p\}$.
\item The $\mu$-polynomial to the polynomial solution $\mathbf q_{2-p}(t) = \mathbf r(t)$ of degree five equals $\mu_{2-p}(t) = 1$.
\end{itemize}

\begin{table}
  \centering
  \begin{tabular}{c|cccccccc}
    \small
    $p$ & $-3, \ldots, -10$ & $-11$ & $-12$ & $-13$ & $-14$ & $-15$ & $\cdots$ & $-20$ \\
    \hline
    $\dim \mathcal Q$ & $5, \ldots, 12$ & $13$ & $14$ & $15$ & $16$ & $17$ & $\cdots$ & $22$ \\
    $\deg \mathbf r_p$ & $5$ & $16$ & $17$ & $18$ & $19$ & $20$ & $\cdots$ & $25$ \\
    $E(\mathbf r_p)$ & $17.75$ & $17.75$ & $17.61$ & $17.64$ & $17.50$ & $17.44$ & $\cdots$ & $15.12$
  \end{tabular}
  \caption{Dimension of interpolation space $\mathcal Q$ (truly rational curves
    mostly), degree of optimal solution $\mathbf r_p$, and curve energy
    $E(\mathbf r_p)$.}
  \label{tab:C1-Ex3-R1}
\end{table}

Denote the optimal solution with respect to the curve energy \eqref{eq:10} by $\mathbf r_p(t)$. Some results for our optimization for decreasing $p$ (increasing dimension of $\mathcal Q$) are displayed in Table~\ref{tab:C1-Ex3-R1}. We can observe the following:
\begin{itemize}
\item Only for $\dim \mathcal Q = 13$ we find better solution in terms of the curve energy $E(\mathbf r_p)$. The improvement is very small.
\item There is a slight increase in curve energy when passing from dimension $14$ to dimension $15$. This can be explained by the non-convexity of the quadratic program so that only a local minimum is found.
\item For increasing dimension of $\mathcal Q$ the minimum drops way beyond the minimal value of $E(\mathbf r)$ that can be achieved with polynomial solutions only. The corresponding solution curves are, however, not visually appealing. This is illustrated in Figure~\ref{fig:C1-Ex3-P} where the optimal solution $\mathbf r_{-15}(t)$ of degree $20$ is depicted. Its energy is approximately $17.44$.
\end{itemize}

While the room for improvement with polynomial curves only (Table~\ref{tab:C1-Ex3-P}) seems to be rather small, using mostly truly rational curve seems to ``over-optimize'' so that the curve energy \eqref{eq:10} no longer is a good measure for the quality of the interpolant. We therefore try a combination of both approaches.

\paragraph{Mixed Approach}

We unite the presented bases of $\mathcal P^{p_1}$ and of $\mathcal R^{p_2}$ for $p_1 = 13$ and $p_2 = 9$ to obtain a new interpolation space $\mathcal Q$. The corresponding solution curve $\mathbf r_o(t)$ is of degree $19$ and has curve energy $E(\mathbf r_o) = 17.0448$. This is lower than what we obtained using polynomial interpolants only. Moreover, Figure~\ref{fig:C1-Ex3-P} confirms that its visual appearance is good while its degree is comparable to the depicted optimal rational interpolant $\mathbf r_{-15}(t)$. These observations essentially also hold true for different combinations of $p_1$ and~$p_2$.

We conclude that in the presented example a mixed polynomial-rational approach seems well-suited for improving a given PH interpolant, provided one is willing to accept a significant raise in the curve degree. This latter drawback does not come as a surprise as our initial curve \eqref{eq:17} (and many other examples from literature) are primarily optimized for low curve degree.

\paragraph{Asymptotic Analysis}

We also performed experimental asymptotic analysis for the $C^1$ interpolation of this example. The data \eqref{eq:16} admits a unique cubic polynomial interpolant $\mathbf{c}(t)$ that is not PH. We use its Bézier form and de Casteljau's algorithm to subdivide it into polynomial cubic curve segments $\mathbf{c}^n_1(t)$, $\mathbf{c}^n_2(t)$, \ldots, $\mathbf{c}^n_{2^n}(t)$. For each curve segment $\mathbf{c}^n_\ell(t)$, we compute a quaternion polynomial $\mathcal{A}^n_\ell(t)$ of degree two according to \cite{sir05} such that the polynomial PH curves $\mathbf{p}^n_\ell(t) \coloneqq \int \mathcal{A}^n_\ell(t) \qi \Cj{\mathcal{A}^n_\ell}(t) \;\mathrm{d}t + \mathbf{p}_0$ exhibit optimal approximation order. In particular,
\begin{equation*}
  \lim_{n = \infty}
  \frac{\max_{\ell, t}\Vert \mathbf{c}^n_\ell(t) - \mathbf{p}^n_\ell(t) \Vert}
       {\max_{\ell, t}\Vert \mathbf{c}^{n+1}_\ell(t) - \mathbf{p}^{n+1}_\ell(t) \Vert}
  = 16 = 2^4
\end{equation*}
so that the optimal approximation order equals four. Using the interpolation spaces $\mathcal{Q}^n_\ell$ spanned by
\begin{equation*}
  \sum_{k=-3}^4 \mathbf{q}_{-1,k}(t+1)^k,\quad
  \sum_{k=-3}^4 \mathbf{q}_{2,k}(t-2)^k,\quad
  \mathbf q_0 = (1, 0, 0),\quad
  \mathbf q_1 = (0, 1, 0),\quad
  \mathbf q_2 = (0, 0, 1),
\end{equation*}
and polynomial solutions of respective degrees $5$, $6$, $7$, and $8$, we run our algorithm with initial solution $\mathbf{p}^n_\ell(t)$ and the curve energy \eqref{eq:10} as optimization target to obtain rational PH curves $\mathbf{r}_\ell^n(t)$. Set
\begin{equation*}
  \varepsilon_n \coloneqq 
  \max_{\ell, t}\Vert \mathbf{c}^n_\ell(t) - \mathbf{r}^n_\ell(t) \Vert.
\end{equation*}
The graph of the ratio $\varepsilon_{n-1}/\varepsilon_n$ for an increasing number $2^n$ of subdivisions is displayed in Figure~\ref{fig:asymptotic-analysis}. Convergence to the optimal value $16$ can be observed. We thus conjecture that optimizing the curve energy does not decrease the approximation order. Using more data from the cubic curve during the optimization (e.g. end point curvatures, middle point interpolation) would be easy and should result in yet a higher approximation order.

\begin{figure}
  \centering
  \includegraphics[scale=0.7]{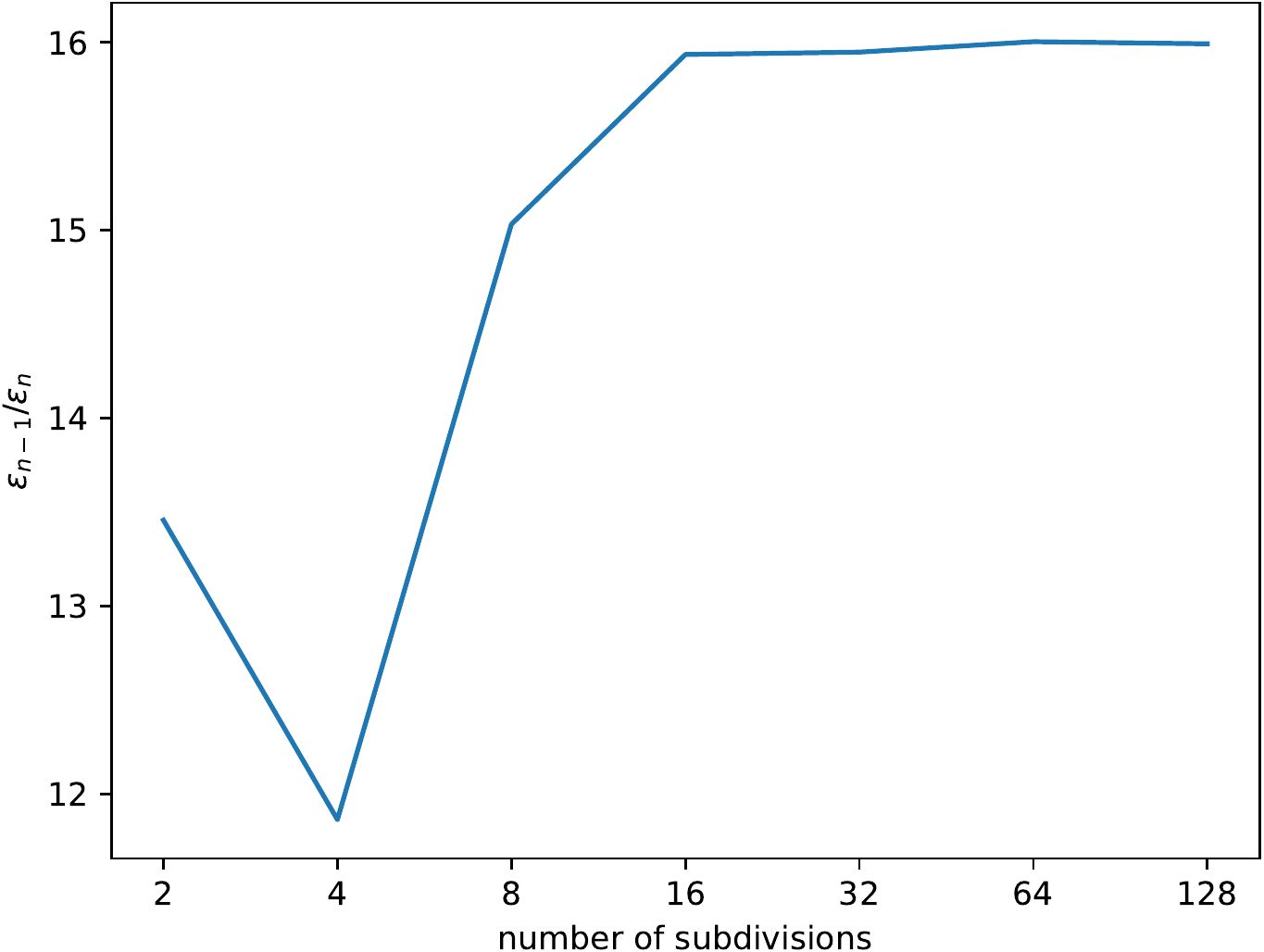}
  \caption{Ratio of approximation errors}
  \label{fig:asymptotic-analysis}
\end{figure}

\subsection*{Complex Roots in the Denominator}

In this section, we consider an example taken from \cite{krajnc3}. There, a rational $G^1$ PH interpolant $\mathbf r(t)$ of class four and degree six is constructed. Once more, it is optimized for low degree so that we cannot expect to be better in that regard but we can improve the curve energy \eqref{eq:10}. The interpolation problem differs from the example discussed in the previous sections in several aspects:
\begin{itemize}
\item We improve upon a truly rational and not upon a polynomial solution.
\item The numerator polynomial has two distinct roots $\beta_1$, $\beta_2$ which both need to be used in the construction of our interpolation space $\mathcal Q$ in order to ensure $\mathbf r(t) \in \mathcal Q$.
\item The thus constructed interpolation space corresponds to a non-generic situation. This needs to be taken into account when computing bases for the subspaces of non-regular and regular curves in~$\mathcal Q$.
\item The two roots $\beta_1$ and $\beta_2$ are conjugate complex so that extra care is required for the construction of a real basis for~$\mathcal Q$.
\end{itemize}

The data to be interpolated is
\begin{equation*}
  \mathbf p_0 = \begin{pmatrix} 0 \\ 0 \\ 0 \end{pmatrix},\quad
  \mathbf p_1 = \begin{pmatrix} 4 \\ 2 \\ 4 \end{pmatrix},\quad
  \mathbf v_0 = \begin{pmatrix} 1 \\ 0 \\ 0 \end{pmatrix},\quad
  \mathbf v_1 = \frac{1}{7} \begin{pmatrix} 6 \\ -3 \\ -2 \end{pmatrix}.
\end{equation*}
The symbolic representation of the interpolant $\mathbf r(t)$ underlying our computation is available but it is too long to be presented here. A numeric approximation is
\begin{equation}
  \label{eq:19}
  \mathbf r(t) =
  \frac{1}{\alpha}
 \begin{pmatrix}
   -1.5063 t^6  + 3.2798 t^5  + 0.6574 t^4  - 0.4547 t^3  - 2.7375 t^2  + 1.8173 t \\
   -0.9345 t^6  + 4.7702 t^5  - 4.3635 t^4  - 0.4475 t^3  + 1.5033 t^2 \\
   0.5818 t^6  + 2.1318 t^5  - 3.0460 t^4  + 0.1859 t^3  + 1.2027 t^2
  \end{pmatrix}
\end{equation}
where $\alpha = (t - \beta_1)^3(t-\beta_2)^3$ and
\begin{equation*}
  \beta_1 = \frac{1}{2} + \frac{\sqrt{6071202867}}{124526} \mathrm i,\quad
  \beta_2 = \frac{1}{2} - \frac{\sqrt{6071202867}}{124526} \mathrm i.
\end{equation*}
Note the complex unit $\mathrm i$ which is to be distinguished from the quaternion unit $\qi$. The rational PH curve \eqref{eq:19} corresponds to
\begin{equation*}
  \mathcal A(t) = (-0.5673 - 0.2579 \qi + 0.5158 \qj - 0.6447 \qk)t^2
                - (1.2721 - 0.3309 \qi + 0.6618 \qj - 0.8272 \qk)t + 1.
\end{equation*}
The polynomial $\mathcal A(t)$ is likewise available in symbolic form.

It turns out that the input data to this example is \emph{non-generic}. This is not as surprising as it may seem at first sight because the construction in \cite{krajnc3} aims for exceptionally low degree of the rational PH interpolant, a property that is related to non-genericity, basically because the constructions of Section~\ref{sec:non-regular} might then allow for the exceptional vanishing of further coefficients in Laurent expansions. More precisely, expanding $\mathbf F(t) \coloneqq \mathcal A(t) \qi \Cj{\mathcal A(t)}$ as
\begin{equation*}
  \mathbf F(t) = \sum_{\ell=0}^4 \mathbf f_{1,\ell}(t-\beta_1)^\ell
               = \sum_{\ell=0}^4 \mathbf f_{2,\ell}(t-\beta_2)^\ell
\end{equation*}
we see that the vectors $\{ \mathbf f_{j,0}, \mathbf f_{j,1}\}$ are linearly dependent as are the vectors $\{\mathbf f_{j,0}, \mathbf f_{j,2}, \mathbf f_{j,3} \}$ for $j \in \{1,2\}$. This means that we are in a case similar to that of Example~\ref{ex:4}. As usual, our interpolation space $\mathcal Q$ will contain the constant solutions $\mathbf{q}_1 = (1,0,0)$, $\mathbf{q}_2 = (0,1,0)$, $\mathbf{q}_3 = (0,0,1)$ but we also add the spaces $\mathcal{N}^{\beta_1}$, $\mathcal{N}^{\beta_2}$ of non-regular solutions which are spanned by
\begin{equation*}
  \mathbf s_{j}(t) = \sum_{\substack{\ell=-3\\\ell \neq 0}}^3 \mathbf s_{j,\ell} (t-\beta_j)^\ell,\quad
  \mathbf t_{j}(t) = \sum_{\substack{\ell=-1\\\ell \neq 0}}^4 \mathbf t_{j,\ell} (t-\beta_j)^\ell,
\end{equation*}
for $j \in \{1,2\}$, respectively. We normalize these solutions by $\mathbf s_{j,-3} = \mathbf t_{j,-1} = \mathbf F(\beta_j)$. These bases of $\mathcal N^{\beta_1}$ and $\mathcal N^{\beta_2}$ are complex but the basis vectors come in conjugate complex pairs. Therefore, $\mathcal N^{\beta_1} \oplus \mathcal{N}^{\beta_2}$ is also spanned by the real basis vectors
\begin{equation*}
  \begin{aligned}
  \mathbf q_4(t) &\coloneqq \tfrac{1}{2}(\mathbf s_1(t) + \mathbf s_2(t)) = \RE(\mathbf s_1(t)) = \RE(\mathbf s_2(t)),\\
  \mathbf q_5(t) &\coloneqq \tfrac{1}{2\mathrm i}(\mathbf s_1(t) - \mathbf s_2(t)) = \IM(\mathbf s_1(t)) = -\IM(\mathbf s_2(t)),\\
  \mathbf q_6(t) &\coloneqq \tfrac{1}{2}(\mathbf t_1(t) + \mathbf t_2(t)) = \RE(\mathbf t_1(t)) = \RE(\mathbf t_2(t)),\\
  \mathbf q_7(t) &\coloneqq \tfrac{1}{2\mathrm i}(\mathbf t_1(t) - \mathbf t_2(t)) = \IM(\mathbf t_1(t)) = -\IM(\mathbf t_2(t)).
  \end{aligned}
\end{equation*}

The real $\mu$-polynomials of these modified basis vectors are obtained from the original $\mu$-polynomials by identical linear combinations. It turns out that the original curve \eqref{eq:19} of \cite{krajnc3} is contained in $\mathbb R \oplus \mathcal N^{\beta_1} \oplus \mathcal N^{\beta_2}$ so that further basis vectors are not required for obtaining an optimal solution. Nonetheless, we augment the thus obtained collection of basis vectors by $0 \ge p \ge 5$ further polynomial basis vectors $\mathbf q_8(t)$, $\mathbf q_9(t)$, \ldots, $\mathbf q_{p+8}(t)$. This results in interpolation spaces of $\dim \mathcal Q = 7+p$. The respective minimizer $\mathbf r_p(t)$ of \eqref{eq:10} has degree $\deg \mathbf r_p(t) = 10+p$.

\begin{table}
  \centering
  \begin{tabular}{c|cccccc}
    \small
    $p$                & $0$      & $1$      & $2$      & $3$      & $4$      & $5$  \\
    \hline
    $\dim \mathcal Q$  & $7$      & $8$      & $9$      & $10$     & $11$     & $12$ \\
    $\deg \mathbf r_p$ & $10$     & $11$     & $12$     & $13$     & $14$     & $15$ \\
    $E(\mathbf r_p)$   & $134.09$ & $131.17$ & $130.16$ & $129.02$ & $128.35$ & $127.99$
  \end{tabular}
  \caption{Dimension of interpolation space $\mathcal Q$, degree of optimal solution $\mathbf r_p$, and curve energy $E(\mathbf r_p)$. The energy of the original curve is approximately $134.10$.}
  \label{tab:G1-Ex2-RP}
\end{table}

\begin{figure}
  \centering
  \includegraphics[]{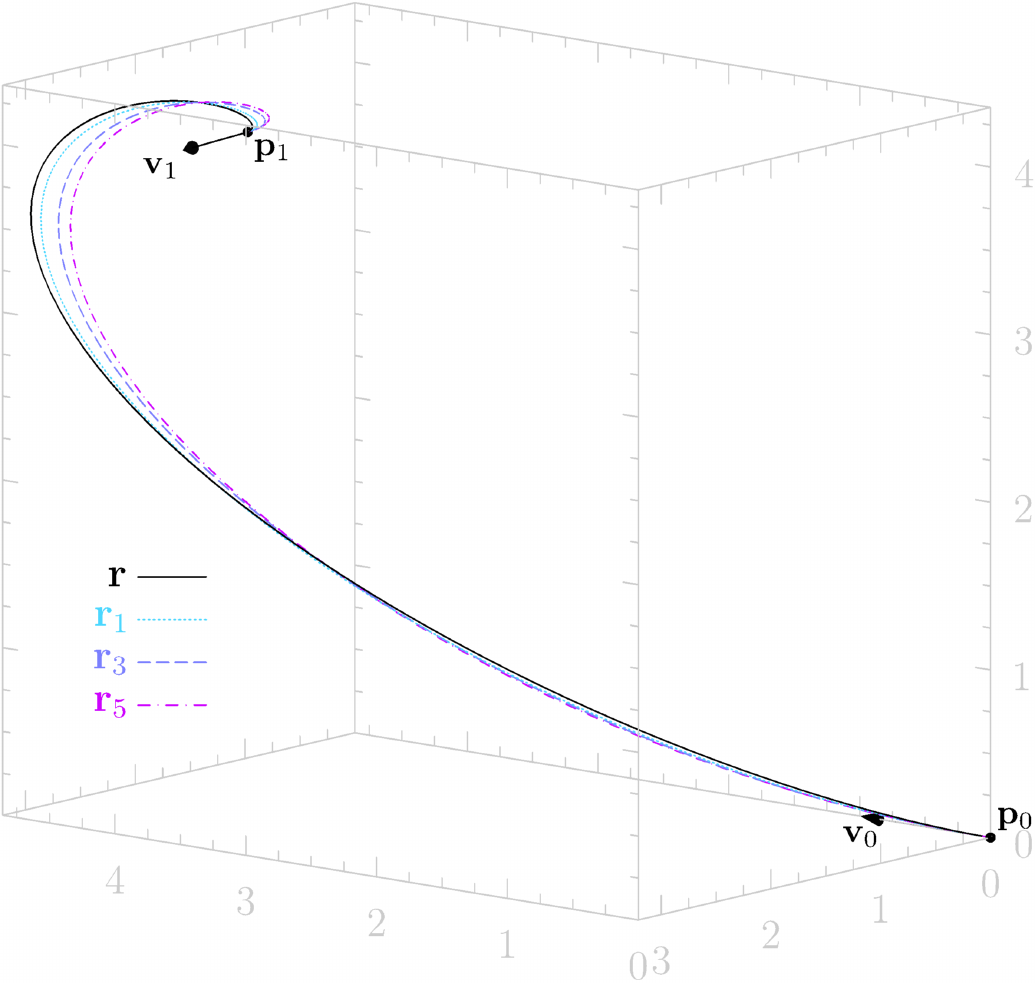}
  \caption{Optimized $G^1$ rational PH interpolants for increasing dimension of interpolation space.}
  \label{fig:G1-Ex2-RP}
\end{figure}

Our findings for optimizing with respect to the curve energy \eqref{eq:10} for various values of $p$ are illustrated in Table~\ref{tab:G1-Ex2-RP} and in Figure~\ref{fig:G1-Ex2-RP}. The energy of the original curve equals $E(\mathbf r) \approx 134.10$. Using only truly rational solutions ($p = 0$) hardly results in an improved curve energy but raises the degree to $10$. Adding further polynomial basis vectors results in a clear improvement of curve energy and visible changes in shape.

\subsection*{Interpolation with Length Constraints}

In our final example we discuss optimization with respect to the length-related functionals \eqref{eq:11} and \eqref{eq:12}. We start with the polynomial quintic PH curve
\begin{equation}
  \label{eq:20}
  \mathbf r(t) =
  \frac{1}{2646000}
  \begin{pmatrix}
    -1864107 t^5  - 1124550 t^4  + 4941300 t^3  - 7371000 t^2  + 2794500 t \\
     370440 t^5  + 6769350 t^4  - 512400 t^3  + 1323000 t^2  - 2268000 t \\
    -2275560 t^5  + 1477350 t^4  - 5419400 t^3  + 7497000 t^2  - 2214000 t
  \end{pmatrix}.
\end{equation}
and consider the $G^1$ interpolation problem with data
\begin{equation*}
  \mathbf p_0 = \mathbf r(0),\quad
  \mathbf p_1 = \mathbf r(1),\quad
  \mathbf v_0 = \mathbf{r'}(0),\quad
  \mathbf v_1 = \mathbf{r'}(1).
\end{equation*}
The curve \eqref{eq:20} is depicted in Figure~\ref{fig:G1-Ex4-Length}. It defines the quaternion polynomial
\begin{equation*}
  \mathcal A(t) =
  (1-\tfrac{1}{5}\qi+2\qj+\tfrac{3}{4}\qk)t^2
  +(\tfrac{1}{3}+3\qi+\tfrac{2}{3}\qj-\qk)t
  -\tfrac{4}{7}-\qi+\tfrac{1}{7}\qj+\tfrac{1}{2}\qk
\end{equation*}
and the vectorial quaternion polynomial $\mathbf F(t) = \mathcal A(t) \qi \Cj{\mathcal A}(t)$.

In contrast to previous examples, we fix the interpolation space $\mathcal Q$ and instead vary the functional to be optimized (and also some constraints). The interpolation space we select is of dimension ten. It is spanned by
\begin{itemize}
\item The three constant solutions $\mathbf q_0 = (1, 0, 0)$, $\mathbf q_1 = (0, 1, 0)$, $\mathbf q_2 = (0, 0, 1)$.
\item Four non-regular rational solutions
  \begin{gather*}
    \mathbf q_4(t) = \sum_{\substack{\ell = -4\\\ell \neq 0}}^3 \mathbf q_{4,\ell} (t-\beta_1)^\ell,\quad
    \mathbf q_5(t) = \sum_{\substack{\ell = -3\\\ell \neq 0}}^4 \mathbf q_{5,\ell} (t-\beta_1)^\ell,\\
    \mathbf q_6(t) = \sum_{\substack{\ell = -4\\\ell \neq 0}}^3 \mathbf q_{6,\ell} (t-\beta_2)^\ell,\quad
    \mathbf q_7(t) = \sum_{\substack{\ell = -3\\\ell \neq 0}}^4 \mathbf q_{7,\ell} (t-\beta_2)^\ell.
  \end{gather*}
  to $\beta_1 = -1$ and $\beta_2 = 2$, respectively. These rational PH curves are normalized by $\mathbf q_4(t) = \mathbf q_5(t) = \mathbf F(\beta_1)$ and $\mathbf q_6(t) = \mathbf q_7(t) = \mathbf F(\beta_2)$, respectively. The corresponding $\mu$-polynomials are
  \begin{gather*}
    \mu_4(t) = (t-\beta_1)^5\lambda_4(t),\quad
    \mu_5(t) = (t-\beta_1)^6\lambda_5(t),\\
    \mu_6(t) = (t-\beta_2)^5\lambda_6(t),\quad
    \mu_7(t) = (t-\beta_2)^6\lambda_7(t),
  \end{gather*}
  respectively.
\item The three polynomial solutions
  \begin{equation*}
    \mathbf q_8(t) = 4\int F(t) \;\mathrm{d}t,\quad
    \mathbf q_9(t) = 5\int t F(t) \;\mathrm{d}t,\quad
    \mathbf q_{10}(t) = 6\int t^2 F(t) \;\mathrm{d}t
  \end{equation*}
  with $\mu$-polynomials $\mu_8(t) = 4$, $\mu_9(t) = 5t$, and $\mu_{10}(t) = 6t^2$, respectively.
\end{itemize}

\begin{figure}
  \centering
  \includegraphics[]{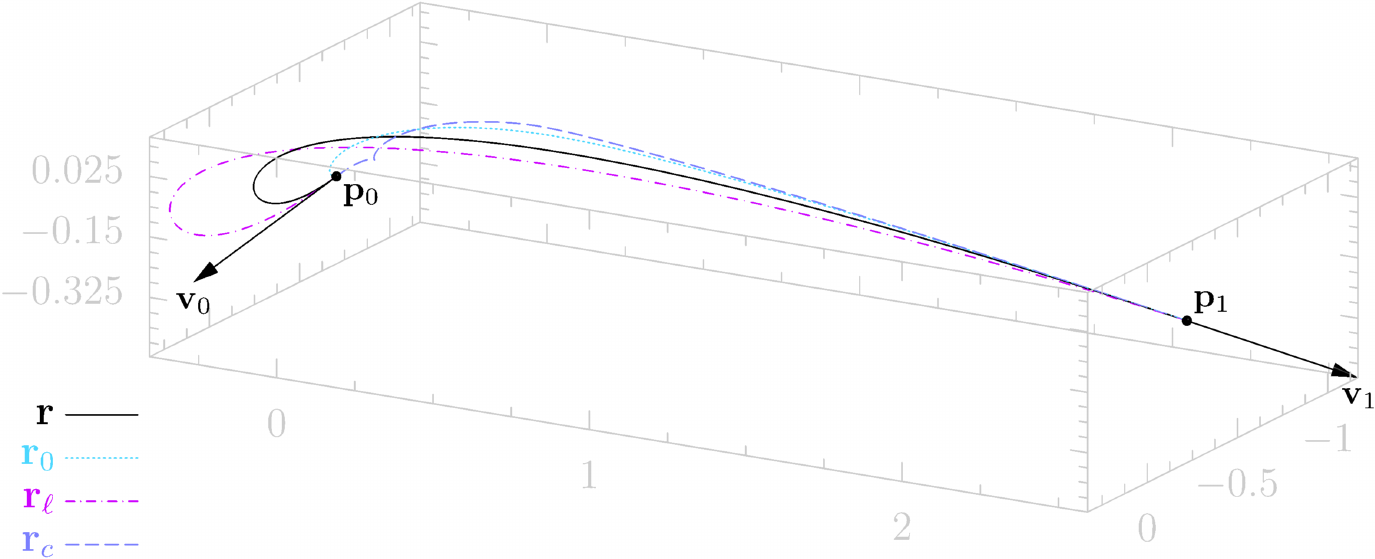}
  \caption{$G^1$ interpolants with length constraints.}
  \label{fig:G1-Ex4-Length}
\end{figure}

The arc-length of the curve $\mathbf r(t)$ in \eqref{eq:20} in the interval $[0,1]$ equals approximately $2.99$. Minimizing the functional \eqref{eq:11} over $\mathcal Q$, we find a solution curve $\mathbf r_0(t)$ of minimal arc-length $s_0 \approx 2.53$. Indeed, it is significantly shorter than the original curve $\mathbf r(t)$ and also differs considerably in shape (Figure~\ref{fig:G1-Ex4-Length}). Using \eqref{eq:12} we can also set a target arc-length $s$. The curve $\mathbf r_\ell$ in Figure~\ref{fig:G1-Ex4-Length} has arc-length $s = 3.5$. We observe that the optimization succeeds as long as $s \ge s_0$, possibly because $\mathcal Q$ contains cuspless interpolating curves of arbitrary arc-length.

Finally, Figure~\ref{fig:G1-Ex4-Length} also illustrates that our non-negativity constraints on the Bernstein coefficients of $\mu(t)$ are essential. Minimizing the arc-length while only requiring the Bernstein coefficients to be larger than $-100$ (we pick some small value in order to avoid an unconstrained optimization problem) results in a solution curve $\mathbf r_c(t)$ of ``minimal arc-length'' which is clearly invalid as it exhibits a cusp near the start point $\mathbf p_0$. Our implementation computes its ``arc length'' as $2.26$ which is smaller than $s_0$. This is the case because our code (on purpose!) does not attempt to correctly resolve the sign when symbolically simplifying $\Vert \mathbf r'(t) \Vert$. Actually, the value $2.26$ is the difference of arc-lengths between cusp and start and end point, respectively.

\section{Conclusion}

We have presented a new, residuum based approach to rational PH curve. It generalizes the well-known construction of polynomial PH curves by direct integration of the hodograph to the rational case and allows for a systematic construction of spaces of PH curves with parallel tangents. While existing interpolation schemes with PH curves focus on optimal (small) curve degrees, our approach is well-suited for straightforward incorporation of curve energy or arclength constraints. We have suggested several interpolation schemes, some of them new and some of them improving upon existing solutions.

In doing so we emphasized the role of the function $\lambda(t)$ in \eqref{eq:3} which provides additional degrees of freedom and computational advantages. In many applications, a polynomial $\lambda(t)$ might be sufficient but rationality has undoubtedly benefits as well. We have in mind PH curves with circularity properties (for example circular projections) which can never be polynomial. Unlike polynomial curves, rational curves can also model bounded closed paths which are useful in kinematics and computer graphics. Our approach with its good control over zeros of the denominator polynomials might be well-suited for designing such curves. In the past, polynomial PH curves have been used for the construction of rational pipe surfaces \cite{sir05}. Rational curves can equally well be used as spine curve, thus simply increasing the space of available surfaces. Since the resulting pipe surfaces are rational anyway, polynomiality of the spine curve feels like an unnecessary restriction.

We would like to emphasize that our results can be extended in several ways:
\begin{itemize}
\item The computations of Section~\ref{sec:residuum} generalize to an ambient space of arbitrary dimension. All we have to do is replace the ``magic number'' three with the dimension $N$ of this space. If $\Span\{\mathbf f_0, \mathbf f_1, \ldots, \mathbf f_n \} = \mathbb R^N$, then the dimension of $\dim \mathcal N^\beta = (n + 1) - N$ (cf. Lemma~\ref{lem:dimN}). Assuming $N > 3$, a canonical basis of $\mathcal N^\beta$ will, for fixed $n$, contain fewer elements (cf. Remark~\ref{rem:canonical-basis}). In particular, it will generically be empty unless the numerator polynomials contains roots of multiplicity at least~$N$.
\item In this paper we constructed the tangent field $\mathbf F(t)$ as $\mathbf F(t) = \mathcal{A}(t) \qi \Cj{\mathcal{A}}(t)$ so that resulting curve is PH. However, the PH property is actually not used at all in the ensuing computations. We can equally well compute Minkowski PH curves \cite{moon99} or any class of rational curves with special properties of the tangent field.
\end{itemize}

In the future, we plan to use similar method to describe the subspace of $\mathcal R$ containing the curves with the rational are length function. Since the tangent vector fields $\mathbf F(t)$ encodes surprisingly many properties of the corresponding PH curves, for example rationality of rotation minimizing frames \cite{FaroukiSir2}, it seems natural to solve related interpolation and approximation tasks using methods explained in this article. Finally, generalizations to rational surfaces with a Pythagorean normal field are conceivable as well but more difficult due to the absence of an advantageous partial fraction decomposition.
 
\bibliographystyle{elsarticle-num}

\end{document}